\newtheorem{theorem}{Theorem}[section]
\newtheorem{lemma}[theorem]{Lemma}
\newtheorem{proposition}[theorem]{Proposition}
\newtheorem{corollary}[theorem]{Corollary}
\theoremstyle{definition}
\theoremstyle{remark}
\newtheorem{remark}[theorem]{Remark}
\newtheorem*{ack}{Acknowledgements}
\numberwithin{equation}{section}
\DeclareMathOperator{\Gal}{Gal}
\DeclareMathOperator{\Hom}{Hom}
\newcommand{\mfrak}[1]{\mathfrak{#1}}
\newcommand{\mbb}[1]{\mathbb{#1}}
\newcommand{\ZP}{\mbb{Z}_p}
\begin{document}

\title[Stickelberger elements and $p$-adic $L$-functions]{On Stickelberger Elements for $\mbb{Q}(\zeta_{p^{n+1}})^+$ and $p$-adic $L$-functions}


\author{Timothy All}
\email{timothy.all@rose-hulman.edu}
\address{5500 Wabash Ave, Terre Haute, IN 47803}



\subjclass[2010]{11R23}{}

\keywords{Stickelberger elements, $p$-adic $L$-functions, distributions, class group, real abelian number fields}

\date{}

\dedicatory{For my parents}

\begin{abstract}
We give a survey of a couple known constructions of $p$-adic $L$-functions including Iwasawa's construction from classical Stickelberger elements. We then construct ``real'' Stickelberger elements, i.e., explicit elements in the Galois group ring with $\ZP$ coefficients that annihilate the Sylow $p$-subgroup of the ideal class group of $\mathbb{Q}(\zeta_{p^{n+1}})^+$. In analogy with Iwasawa's work, we show that these elements are coherent in $\ZP$-towers and give rise to twisted $p$-adic $L$-functions.
\end{abstract}

\maketitle

\section{Introduction}

Let $\chi$ be a Dirichlet character. The $L$-function attached to $\chi$ is defined on $\Re(s) >1$ by
\[ L(s,\chi) = \sum_{n=1}^{\infty} \frac{\chi(n)}{n^s}\]
It would be an understatement to say that these functions play an important role in algebraic number theory. To illustrate what we mean, let $k$ be an abelian number field with Galois group $G_k$. We write $\widehat{G}_k = \Hom_{\mbb{Z}} (G_k, \mbb{C})$. By the Kronecker-Weber theorem, $k \subset \mbb{Q}(\zeta_m)$ for some minimal positive integer $m$, so we may associate $\widehat{G}_k$ with a certain subgroup of Dirichlet characters on $(\mbb{Z}/m\mbb{Z})^{\times}$. Let $h_k$ denote the class number of $k$, $R(k)$ the regulator of $k$, $w_k$ the order of the group of roots of unity of $k$, and $d_k$ the discriminant of $k$. We have the following classical result:
\begin{equation} \label{classnoformula} \frac{2^{r_1} (2\pi)^{r_2} h_k R(k)}{w_k \sqrt{|d_k|}} = \prod_{\substack{\chi \in \widehat{G}_k \\ \chi \neq 1}} L(1,\chi),\end{equation}
where $r_1$ (resp. $r_2$) is the number of real places (resp. pairs of complex places) of $k$. The above formula is a first indication that the $L$-functions attached to characters in $\widehat{G}_k$ encode fundamental algebraic properties of $k$.

Of particular interest is the class number $h_k$. This is because many Diophantine equations can be attacked by converting to an ideal equation in the ring of integers of an appropriate number field. The classical example is the Fermat equation: let $p$ be an odd prime (from here on out) and consider $x^p + y^p =z^p$. Assuming a non-trivial solution exists, we obtain the ideal equation
\[ \prod_{j=0}^{p-1} (x+ y\zeta_p^j ) = (z)^p \]
in $\mbb{Z}[\zeta_p]$ where $\zeta_p = e^{2\pi i/p}$. Under some additional assumptions, the ideals $(x+ y\zeta_p^j)$ can be shown to be co-prime, thus $(x+\zeta_p y) = \mfrak{a}^p$ for some ideal $\mfrak{a} \subset \mbb{Z}[\zeta_p]$. If $p \nmid h_k$, then this implies that $\mfrak{a}=(\alpha)$ is principal. On the other hand, a relation of the sort $x+y\zeta_p = \epsilon \alpha^p$ for a unit $\epsilon$ is impossible. This type of approach can be applied to many Diophantine equations, and the quality of $h_k \bmod{p}$ is often a feature of those arguments.

In order to better understand the quality of $h_k \bmod{p}$ or, even better, $h_k \bmod{p^n}$, it makes sense  considering \Cref{classnoformula} to search for a $p$-adic analogue of $L(s,\chi)$. In particular, we want a continuous function $L_p(\cdot, \chi) : \mbb{Z}_p \to \mbb{C}_p$ such that
\begin{equation} \label{define} L_p(1-n, \chi) = (1- \chi(p) p^{n-1}) L(1-n, \chi) \footnote{The ``Euler factor'' at $p$ must be removed for issues relating to convergence.} \end{equation}
for every integer $n > 0$ satisfying $n\equiv 1 \bmod{p-1}$. Such a function is necessarily unique. Moreover, if $\chi$ is odd then $L_p(s,\chi)$ is identically zero. This is because
\[ L(1-n,\chi) = -B_{n,\chi}/n, \]
for all integers $n \geq 1$ where the $B_{n,\chi}$ are the generalized Bernoulli numbers. Specifically, the rational numbers $B_{n,\chi}$ are defined by the following formula:
\begin{equation} \label{genbernoulli} \sum_{a=1}^{f_{\chi}} \frac{\chi(a) t e^{at}}{e^{f_{\chi}t} -1} = \sum_{n=0}^{\infty} B_{n,\chi} \frac{t^n}{n!},\end{equation}
where $f_{\chi}$ is the conductor of $\chi$. But $B_{n,\chi} = 0$ in the event that $n \equiv 1 \bmod{p-1}$ and $\chi$ is odd. Since $L_p(1-n,\chi)$ vanishes on a dense subset of $\mbb{Z}_p$, it follows that $L_p(s,\chi)$ is identically zero in this case.

So it would seem $p$-adic $L$-functions are naturally suited to comment on totally real number fields. In fact, we have the following $p$-adic analogue of \Cref{classnoformula}. For a totally real abelian number field $k$ of degree $n$ over $\mbb{Q}$, we have
\begin{equation} \label{padicclassnoformula} \frac{2^{n-1} h_k R_p(k)}{\sqrt{d_k}} = \prod_{\substack{\chi \in \widehat{G} \\ \chi \neq 1}} \left( 1 - \frac{\chi(p)}{p} \right)^{-1} L_p(1,\chi), \end{equation}
where $R_p(k)$ is the $p$-adic Regulator of $k$. The quantity $R_p(k)$ is defined just as $R(k)$ but with the Iwasawa $p$-adic logarithm replacing the usual logarithm. Note that the terms $R_p(k)$ and $\sqrt{d_k}$ are only defined up to a sign, but $R_p(k)/\sqrt{d_k}$ can be defined without ambiguity. It is known that $R_p(k)$ is non-zero for abelian fields and conjectured to be non-zero for all number fields $k$, but a proof has yet to be found.

Various constructions of $p$-adic $L$-functions have been given over the years, the first of which was offered by Leopoldt and Kubota \cite{LK}. Specifically, they show that the function
\[ \frac{1}{s-1} \lim_{n \to \infty} \frac{1}{p^n[f,p]} \sum_{\substack{x=1 \\ (x,p)=1}}^{p^n[f,p]} \chi(x) \sum_{n=0}^{\infty} \binom{1-s}{n} ( \langle x \rangle -1)^n,\]
where $[f,p]$ is the least common multiple of $f$ and $p$, in fact, satisfies \Cref{define}. Iwasawa \cite{I} famously constructed $L_p(s,\chi)$ from the classical Stickelberger elements in the theory of cyclotomic fields. As a result of his construction, he showed that there exists a power series $F(T) \in \mbb{Z}_p(\chi) \llbracket T-1 \rrbracket$ such that $F((1+p)^s) = L_p(s,\chi)$ where $\ZP(\chi)$ is the ring of integers of $\mbb{Q}_p(\chi(1),\chi(2),\ldots)$. This result is not only very deep by the nature of its construction, but it is also very useful.

It is a theme of this article that constructions of $p$-adic $L$-functions, and other related objects, benefit from a certain measure theoretic point of view. For example, the Leopoldt-Kubota construction revolves around expressing $L_p(s,\chi)$ as the $\Gamma$-transform of a measure arising from a certain rational function related to \Cref{genbernoulli}. In \S 2, we outline the notation and setup to appreciate this measure theoretic viewpoint.

In \S 3, we discuss Iwasawa's construction using odd-parts of the classical Stickelberger elements. In \S 4, we discuss a new construction that mirrors that of Iwasawa but takes place in the ``plus-part''. In particular, we construct real analogs of Stickelberger elements, i.e., explicit elements in a Galois group ring with $\ZP$-coefficients that annihilate the Sylow $p$-subgroup of the ideal class group of $\mbb{Q}(\zeta_{p^{n+1}})^+$. The Kummer-Vandiver conjecture states that $p \nmid h_{k_n^+}$, so these elements are of particular interest. We then show that the even-parts of these new Stickelberger elements give rise to twisted $p$-adic $L$-functions. In \S 5, we summarize and compare results across the previous sections. Our main result is \Cref{main}.

\section{Preliminaries}

Let $\mfrak{o}$ be the ring of integers of $K$, a finite extension of $\mbb{Q}_p$. Let $\Gamma$ be a group topologically isomorphic to $\mbb{Z}_p$. We write $\Gamma_n = \Gamma/\Gamma^{p^n}$, this gives us $\Gamma = \varprojlim \Gamma_n$ under the natural maps. Let $\gamma$ be a fixed topological generator for $\Gamma$, and write $\gamma_n$ for $\gamma \bmod{\Gamma^{p^n}}$. For any commutative ring $R$, we set
\begin{align*}
R\llbracket \Gamma \rrbracket &:= \varprojlim R[\Gamma_n], \\
R\llbracket T-1 \rrbracket &:= \text{the power series ring with coefficients in $R$}.\\
\end{align*}
An $R$-valued \emph{distribution} on $\Gamma$ is a collection of maps $\{ \mu_n : \Gamma_n \to R \}$ with the following property:
\[ \mu_n(x) = \sum_{y \mapsto x} \mu_{n+1}(y).\]
We typically write $\mu(a+p^n\ZP)$ in place of $\mu_n(\gamma_n^a)$. Let $R(\Gamma)$ denote the ring (under convolution) of $R$-valued distributions. For a continuous function $f:\mbb{Z}_p \to \mbb{C}_p$ and a distribution $\mu \in \mbb{C}_p(\Gamma)$, we write
\[ \int_{\ZP} f\ \mathrm{d}\mu := \lim_{n \to \infty} \sum_{x=0}^{p^n-1} f(x)\mu(x + p^n\ZP),\]
if it exists. The integral is guaranteed to exist if there exists an integer $M$ such that $|\mu(x+p^n\ZP)|_p < M$ for all $x$ and $n$. In this case, we say $\mu$ is \emph{bounded}. We write $F_{\mu}(T)$ for the \emph{Fourier transform} of $\mu$:
\[ F_{\mu}(T) = \int_{\ZP} T^x\ \mathrm{d}\mu(x) = \sum_{n=0}^{\infty} \left( \int_{\ZP} \binom{x}{n}\ \mathrm{d}\mu(x) \right) (T-1)^n \in \mbb{C}_p\llbracket T-1 \rrbracket.\]
If $F \in \mfrak{o} \llbracket T-1 \rrbracket$, then there exists a distribution $\mu_F$ whose Fourier transform is $F$, specifically
\[ \mu_F(a+p^n \ZP) = \frac{1}{p^n} \sum_{x=0}^{p^n-1} \zeta_{p^n}^{-ax} F(\zeta_{p^n}^x).\]
Finally, for $\mu \in R(\Gamma)$, let $\theta_{\mu}$ denote the following element of $R[\Gamma_n]$:
\[ \theta_{\mu} = \left( \sum_{a=0}^{p^n-1} \mu(a+p^n\ZP) \gamma_n^{-a} \right).\]
The map $R(\Gamma) \to R\llbracket \Gamma \rrbracket$ defined by $\mu \to \theta_{\mu}$ is an isomorphism of rings. Even better, when $R = \mfrak{o}$, the following isomorphisms fit into a commutative diagram:
\begin{center}
\begin{tikzpicture}[scale=.75]
\node (M) at (-1.5,0) {$\mfrak{o}(\Gamma)$};
\node (L) at (1,0) {$\mfrak{o}\llbracket T-1 \rrbracket$};
\node (GR) at (1,-2) {$\mfrak{o}\llbracket \Gamma \rrbracket$};

\node (m) at (6,0) {$\mu$};
\node (Fm) at (8,0) {$F_{\mu}$};
\node (thm) at (8,-2) {$\theta_{\mu}$ };

\node (mF) at (10,0) {$\mu_F$};
\node (F) at (12,0) {$F$};
\node (thF) at (12, -2) {$\theta_F$};

\node (text) at (3.5,-1) {given by};

\node (text) at (9,-1) {or};

\draw [->,-latex] (M) to node {} (L);
\draw [->,-latex] (L) to node {} (GR);
\draw [->,-latex] (M) to node {} (GR);

\draw [|->] (m) to node {} (Fm);
\draw [|->] (Fm) to node {} (thm);
\draw [|->] (m) to node {} (thm);

\draw [|->] (mF) to node {} (F);
\draw [|->] (F) to node {} (thF);
\draw [|->] (mF) to node {} (thF);
\end{tikzpicture}
\end{center}
In this setup, $\gamma^{-1} \in \mfrak{o} \llbracket \Gamma \rrbracket$ corresponds to the point-mass distribution centered at $1$. Accordingly, we have $\gamma^{-1}$ corresponds to $T$ in $\mfrak{o} \llbracket T-1 \rrbracket$, so $\gamma$ corresponds to $1/T \in \mfrak{o}\llbracket T-1 \rrbracket$.

Alternatively, we have $\mfrak{o} \llbracket \Gamma \rrbracket \simeq \mfrak{o}\llbracket X \rrbracket$ induced by the Iwasawa isomorphism
\begin{align*}
\mfrak{o}[\Gamma_n] &\to \mfrak{o}\llbracket X \rrbracket /(1-(1+X)^{p^n}) \\
\gamma_n &\mapsto 1+X \bmod{1-(1+X)^{p^n}}.
\end{align*}
If $\mu$ is the $\mfrak{o}$-valued distribution underlying $\theta_{\mu} \in \mfrak{o}\llbracket \Gamma \rrbracket$, we write $I_{\mu}(X) \in \mfrak{o}\llbracket X \rrbracket$ to denote the corresponding power series under the Iwasawa isomorphism. In this setup, $\gamma$ corresponds to $1+X$. Altogether, we have $\mfrak{o} \llbracket \Gamma \rrbracket \simeq \mfrak{o} \llbracket X \rrbracket \simeq \mfrak{o} \llbracket T-1 \rrbracket$ via $\gamma \mapsto 1+X \mapsto T$, so
\[ I_{\mu}(X) = F_{\mu} \left( \frac{1}{X+1} \right) \quad \text{and} \quad F_{\mu}(T) = I_{\mu} \left( \frac{1}{T} -1 \right).\]

Any $x \in \ZP^{\times}$ can be written as a product of a $p-1$-st root of unity $\omega(x)$ and an element $\langle x \rangle \in 1+ p\ZP$. We naturally think of $\omega$ as a Dirichlet character of conductor $p$. For a distribution $\mu \in \mfrak{o}(\Gamma)$, we define the \emph{Gamma transform} of $\mu$, denoted $G_{\mu}(s)$, by
\[ G_{\mu}(s) = \int_{\ZP^{\times}} \langle x \rangle^s\ \mathrm{d}\mu(x).\]

As mentioned before, this measure-theoretic language facilitates the types of interpolation problems encountered when constructing $L_p(s,\chi)$. For example, let $\mfrak{o}$ be the ring of integers of a finite extension of $\ZP$ containing the values of the Dirichlet character $\chi$. Let $c >1$ be an integer co-prime to $p$, and let $\chi_c$ be the function defined by
\[ \chi_c(a) = \begin{cases}
    \chi(a),& c \nmid a \\
    \chi(a)(1-c),& c \mid a.
    \end{cases}\]
Let $F(T)$ be the rational function
\[ F(T) = \frac{ \sum_{a=1}^{f_{\chi}} \chi_c(a) T^a}{1 - T^{f_{\chi}}}.\]
One can show that $F(T) \in \mfrak{o}\llbracket T-1 \rrbracket$ (by virtue of our definition for $\chi_c$), so let $\mu_{F} \in \mfrak{o}(\Gamma)$ be the corresponding distribution. By differentiating the defining formula for $F$, we see that for non-negative integers $k$
\[ \int_{\ZP} x^k\ \mathrm{d}\mu_F(x) = \left. \left( T \frac{\mathrm{d}}{\mathrm{d}T} \right)^k F(T) \right|_{T=1}.\]
In particular, this gives us
\[ \int_{\ZP} x^k\ \mathrm{d}\mu_F(x) = \left.\frac{\mathrm{d}^k}{\mathrm{d}z^k}  F(e^z) \right|_{z=0} = L(-k,\chi_c).\]
The last equality in the above follows from the fact that the coefficient on $z^k/k!$ in the Laurent expansion of $F(e^z)$ at $z=0$ is equal to $L(-k,\chi_c)$ where $L(s,\chi_c)$ is the analytic continuation of the Dirichlet series defined for $\Re(s) >1$ by
\[ L(s,\chi_c) = \sum_{n=1}^{\infty} \frac{\chi_c(n)}{n^s}.\]
From the above expression, we get that
\[ L(-k,\chi_c) = (1-\chi(c) c^{k+1}) L(-k,\chi).\]
The Gamma transform $G_{\mu_F}(s)$ is an interpolation of the integrals of $x^k$ with respect to $\mu_F$. Taking the limit over integers $k$ such that $k \to s$ ($p$-adically) and $k \equiv 0 \bmod{p-1}$, we see that
\[ \lim \int_{\ZP} x^k\ \mathrm{d}\mu_F(x) = \int_{\ZP^{\times}} \langle x \rangle^s\ \mathrm{d}\mu_F(x).\]
Whence
\begin{equation} \label{Lpgamma} \frac{G_{\mu_F}(s)}{1-\chi(c) \langle c \rangle^{s+1}} = L_p(-s,\chi).\end{equation}
The fact that the $p$-adic $L$-function is the Gamma transform of a rational function was a crucial ingredient in Sinnott's proof \cite{Sgamma} of the vanishing of the $\mu$-invariant for the cyclotomic $\ZP$-extension of an abelian number field.
\section{The Minus Side}

From \Cref{Lpgamma}, one can deduce that there are elements $G,H \in \mfrak{o}\llbracket T-1 \rrbracket$ such that
\[ L_p(-s,\chi) = \frac{G(\kappa^s)}{H(\kappa^s)},\]
where $\kappa$ is a fixed generator for $1 + p\ZP$. The fact that this is true was originally proven by Iwasawa \cite{I}, though he proves even more. In particular, if $\chi$ (resp. $\psi$) is a tame (resp. wild) character, then there exists $G_{\chi}, H_{\chi} \in \mfrak{o}\llbracket T-1 \rrbracket$ such that
\[ L_p(s,\chi\psi) = \frac{G_{\chi}(\zeta_{\psi} \kappa^s)}{H_{\chi}(\zeta_{\psi} \kappa^s)} \]
where $\zeta_{\psi} = \psi^{-1}(\kappa)$. This is a very important result for a number of reasons. For example, the right hand side of \Cref{padicclassnoformula} can now be expressed with a handful of power series. This allows one to study the growth of the class number of $k_n$ and $k_n^+$ as $n$ grows (see \cite{IW2}).

For clarity of exposition, we restrict ourselves to the following special case. Let $k_n = \mbb{Q}(\zeta_{p^{n+1}})$. We have the decomposition
\[ W \times U =\ZP^{\times} \]
where $W$ is the set of $p-1$-st roots of unity in $\ZP$ and $U = 1 + p\ZP$. We write $\sigma_a$ to denote the automorphism $\zeta_{p^{n+1}} \mapsto \zeta_{p^{n+1}}^a$ in $G_n = \Gal(k_n/\mbb{Q})$. For any $x \in \ZP$, we write $s_n(x)$ to denote the unique integer in the interval $[0,p^{n+1})$ satisfying $s_n(x) \equiv x \bmod{p^{n+1}}$. Then $s_n$ provides an onto morphism
\begin{align*}
\ZP^{\times} &\to G_n \\
x &\mapsto \sigma_{s_n(x)}.
\end{align*}
Let $\gamma_n = \sigma_{s_n(\kappa)} \in \Gal(k_n/k_0) = \Gamma_n$, so we have
\[ \zeta_{p^{n+1}}^{\sigma_{s_n(\zeta \kappa^a)}} = \zeta_{p^{n+1}}^{\zeta \kappa^a} = \zeta_{p^{n+1}}^{\sigma_{s_n(\zeta)} \gamma_n^a}.\]

\subsection{Stickelberger Elements}
Let $\ell$ be a rational prime congruent to $1 \bmod{p^{n+1}}$. Let $\tau$ be a generator for $\Gal(k_n(\zeta_{\ell})/k_n)$, and consider the Gauss sum
\[ g_n(\ell) = - \sum_{b=1}^{\ell-1} \zeta_{p^{n+1}}^b \zeta_{\ell}^{\tau^b}.\]
If $\lambda$ is a prime of $k_n$ above $\ell$, then for some integer $c$ with $(c,p)=1$, one shows that $g_n(\ell)^{c \sigma_c^{-1} -1} \in k_n$ and
\[ \big( g_n(\ell)^{c\sigma_c^{-1}-1} \big) = \lambda^{(c- \sigma_c) \theta_n},\]
where $\theta_n$ is the group ring element
\[ \frac{1}{p^{n+1}} \sum_{\substack{a=1 \\ (a,p)=1}}^{p^{n+1}} a \sigma_a^{-1} \in \mbb{Q}[\Gal(k_n/\mbb{Q})]. \]
The factorization of $g_n(\ell)$ was originally of interest because of problems pertaining to reciprocity, but upon further inspection it's quite peculiar. After all, if we change the prime $\ell$, the group ring element $\theta_n$ still remains! This gestalt shift in viewing the factorization of the Gauss sums $g_n(\ell)$ leads us to
\begin{theorem}[Stickelberger] \label{classicStickelberger}
For any $\beta \in \mbb{Z}[G]$ such that $\beta \theta_n \in \mbb{Z}[G]$, (for instance, let $c \in \mbb{Z}$ such that $(c,p)=1$ and take $\beta =(c-\sigma_c)$), we have that $\beta \theta_n$ annihilates the ideal class group of $k_n$.
\end{theorem}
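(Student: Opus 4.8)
The plan is to reduce the statement to the factorization of the Gauss sum $g_n(\ell)$ recorded just above, and then to promote the special annihilators $(c-\sigma_c)\theta_n$ to an arbitrary $\beta\theta_n$ by a purely algebraic argument. First I would fix an ideal class $\mfrak{c}\in\Cl(k_n)$ and choose a prime ideal $\lambda$ representing $\mfrak{c}$ whose underlying rational prime $\ell$ splits completely in $k_n/\mbb{Q}$, i.e. $\ell\equiv 1\bmod{p^{n+1}}$. Such a $\lambda$ exists in every class by the Chebotarev density theorem applied to $H/\mbb{Q}$, where $H$ is the Hilbert class field of $k_n$: the condition of splitting completely in $k_n$ is a condition on the Frobenius in $\Gal(k_n/\mbb{Q})$, the condition of lying in a prescribed class is a condition on the Frobenius in $\Gal(H/k_n)$, and both can be met simultaneously. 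For any $c$ with $(c,p)=1$, the displayed factorization gives
\[ \big( g_n(\ell)^{c\sigma_c^{-1}-1}\big) = \lambda^{(c-\sigma_c)\theta_n}, \]
and since the left-hand side is a principal ideal of $k_n$, the class $\mfrak{c}^{(c-\sigma_c)\theta_n}$ is trivial. As $\mfrak{c}$ was arbitrary, $(c-\sigma_c)\theta_n$ annihilates all of $\Cl(k_n)$, for every integer $c$ coprime to $p$.

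Next I would deduce the general case. Since $\Ann_{\mbb{Z}[G]}\big(\Cl(k_n)\big)$ is an ideal of $\mbb{Z}[G]$, it suffices to show that any $\beta\theta_n\in\mbb{Z}[G]$ lies in the $\mbb{Z}[G]$-ideal generated by the elements $(c-\sigma_c)\theta_n$ with $(c,p)=1$. This is the standard structural description of the Stickelberger ideal $\mbb{Z}[G]\cap\theta_n\mbb{Z}[G]$: one checks, using the fractional-part identities satisfied by the coefficients $a/p^{n+1}$ of $\theta_n$, that the elements $(c-\sigma_c)\theta_n$ (together with the norm element, which annihilates $\Cl(k_n)$ trivially because the class group of $\mbb{Q}$ is trivial) generate this intersection. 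Granting this, every $\beta\theta_n$ is a $\mbb{Z}[G]$-combination of annihilators and hence itself annihilates $\Cl(k_n)$.

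The conceptual heart of the argument, the factorization of $g_n(\ell)$, is already in hand, so the main obstacle is this second step: verifying that the special elements $(c-\sigma_c)\theta_n$ generate the whole integrality ideal. This is a finite combinatorial computation rather than a deep input, but it is where care is needed. Indeed, a naive elementary shortcut (choosing auxiliary $c\equiv 1\bmod{p^{n+1}}$ while controlling $c$ modulo other primes, so that $\sigma_c$ acts trivially) only shows that $\beta\theta_n$ kills the prime-to-$p$ part of each class; the $p$-part genuinely requires the ideal-theoretic equality coming from the Gauss sum, which is precisely what the generation lemma packages. A secondary point to get right is the Chebotarev step ensuring that degree-one primes over completely split $\ell$ represent every class of $\Cl(k_n)$.
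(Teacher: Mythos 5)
The paper never actually proves this theorem: it is quoted as Stickelberger's classical result in a survey section, with only the factorization $\big(g_n(\ell)^{c\sigma_c^{-1}-1}\big) = \lambda^{(c-\sigma_c)\theta_n}$ displayed as motivation and the reader referred to the literature, so there is no in-paper proof to compare against. Your proposal is a correct reconstruction of the standard proof, and the two ingredients you add to the paper's sketch are exactly the ones genuinely needed. The Chebotarev step works as you describe: $H/\mbb{Q}$ is Galois, a prime of $H$ whose Frobenius equals the element of $\Gal(H/k_n)$ corresponding to $\mfrak{c}$ lies over a degree-one prime $\lambda \in \mfrak{c}$ of $k_n$, and degree one plus unramified in the Galois extension $k_n/\mbb{Q}$ forces $\ell$ to split completely, i.e.\ $\ell \equiv 1 \bmod{p^{n+1}}$. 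The generation step is precisely \cite[Lemma 6.9]{W}: every $\beta\theta_n$ lying in $\mbb{Z}[G]$ belongs to the $\mbb{Z}[G]$-ideal generated by the elements $(c-\sigma_c)\theta_n$ with $(c,p)=1$. Your hedge about adjoining the norm element is unnecessary: taking $c = 1+p^{n+1}$ shows $p^{n+1}$ lies in the ideal generated by the $c-\sigma_c$, and modulo that ideal $\sigma_c \equiv c$, so the norm is congruent to $\sum_c c \equiv 0$ there. Two details to tighten: the paper's display asserts the factorization only ``for some integer $c$,'' whereas both of your steps need it for every $c$ prime to $p\ell$ (that is the correct classical statement, so your argument stands, but it should be said); and since $\ell$ depends on the class $\mfrak{c}$, for a fixed $c$ you should choose $\lambda$ with $\ell \nmid c$, which the positive density in Chebotarev permits. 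Finally, for contrast: Washington's own proof of the theorem needs neither Chebotarev nor the Hilbert class field, because it attaches Gauss sums over $\mbb{F}_q$, $q = \ell^f$, to arbitrary primes prime to $p$; your split-prime route is the one this paper's exposition suggests, at the price of invoking class field theory, and it buys the simpler Gauss sums $g_n(\ell)$ that the paper writes down.
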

This result allows us to investigate the class group of $k_n$ using an element that only depends on the conductor of $k_n$. See \cite{SStickelberger} for more information on Stickelberger elements in the ``minus-part''.

\subsection{$p$-adic $L$-functions}

Let $\chi$ be a non-trivial character of $W \simeq \Gal(k_0/\mbb{Q})$, and let $e_{\chi}$ denote the idempotent
\[ \frac{1}{|W|} \sum_{\zeta \in W} \chi(\zeta) \sigma_{s_n(\zeta)}^{-1}.\]
Let $\theta_n(\chi) \in \mbb{Q}_p[\Gamma_n]$ denote the $\chi$-component of the Stickelberger element $\theta_n$: $e_{\chi} \theta_n = \theta_n(\chi) e_{\chi}$. We have
\[ \theta_n(\chi) = \sum_{a=0}^{p^n-1} \left( \frac{-1}{p^{n+1}} \sum_{\zeta \in W} s_n(\zeta \kappa^a) \chi(\zeta)^{-1} \right) \gamma_n^{-a}.\]
\begin{lemma}[Iwasawa] The elements $\theta_n(\chi)$ are coherent, i.e., the sequence $(\theta_n(\chi))$ belongs to $\mbb{Q}_p\llbracket \Gamma \rrbracket$.
\end{lemma}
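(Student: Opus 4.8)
The plan is to prove coherence by checking directly that the sequence $(\theta_n(\chi))$ is compatible under the natural projections $\mbb{Q}_p[\Gamma_{n+1}] \to \mbb{Q}_p[\Gamma_n]$ induced by $\gamma_{n+1} \mapsto \gamma_n$, since this compatibility is exactly the condition for membership in $\varprojlim \mbb{Q}_p[\Gamma_n] = \mbb{Q}_p\llbracket \Gamma \rrbracket$. Writing the coefficient of $\gamma_n^{-a}$ in $\theta_n(\chi)$ as
\[ c_n(a) = \frac{-1}{p^{n+1}}\sum_{\zeta \in W} s_n(\zeta \kappa^a)\chi(\zeta)^{-1}, \]
I note that the projection sends $\gamma_{n+1}^{-b} \mapsto \gamma_n^{-(b \bmod p^n)}$, so the $\gamma_n^{-a}$-coefficient of the image of $\theta_{n+1}(\chi)$ is $\sum_{j=0}^{p-1} c_{n+1}(a + jp^n)$. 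Coherence thus reduces to the single family of identities $\sum_{j=0}^{p-1} c_{n+1}(a+jp^n) = c_n(a)$, which after clearing denominators is the distribution relation
\[ \sum_{j=0}^{p-1}\sum_{\zeta\in W} s_{n+1}(\zeta\kappa^{a+jp^n})\,\chi(\zeta)^{-1} = p\sum_{\zeta\in W} s_n(\zeta\kappa^a)\,\chi(\zeta)^{-1}. \]

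The heart of the matter, and the step I expect to require the most care, is a set-theoretic identity for the section $s_n$ at a fixed $\zeta$. Setting $y = \zeta\kappa^a \in \ZP^{\times}$, I would first observe that $\kappa^{p^n}$ topologically generates $1 + p^{n+1}\ZP$, so the powers $\kappa^{jp^n}$ for $j = 0,\dots,p-1$ represent the $p$ distinct cosets of $1 + p^{n+2}\ZP$ in $1 + p^{n+1}\ZP$; concretely $\kappa^{jp^n} \equiv 1 + t_j p^{n+1} \pmod{p^{n+2}}$ with $\{t_j\} = \{0,1,\dots,p-1\}$. Multiplying by the unit $y$ and using that multiplication by $y$ permutes residues modulo $p$, I would conclude that $\{\,y\kappa^{jp^n} \bmod p^{n+2} : j\,\}$ is precisely the set of all lifts of $y \bmod p^{n+1}$ to residues modulo $p^{n+2}$. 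Hence the multiset $\{s_{n+1}(\zeta\kappa^{a+jp^n})\}_j$ equals $\{\,s_n(\zeta\kappa^a) + r\,p^{n+1} : r = 0,\dots,p-1\,\}$, and summing yields
\[ \sum_{j=0}^{p-1} s_{n+1}(\zeta\kappa^{a+jp^n}) = p\, s_n(\zeta\kappa^a) + \tfrac{p-1}{2}\,p^{n+2}. \]

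Finally I would substitute this per-$\zeta$ identity into the displayed distribution relation. The main term $p\,s_n(\zeta\kappa^a)$ reproduces the right-hand side exactly, and the only leftover is the correction $\tfrac{p-1}{2}\,p^{n+2}\sum_{\zeta \in W}\chi(\zeta)^{-1}$. This is exactly where the hypothesis that $\chi$ is nontrivial is used: by orthogonality of characters $\sum_{\zeta\in W}\chi(\zeta)^{-1} = 0$, so the correction vanishes and the identity holds for every $a$ and $n$, giving coherence. It is worth emphasizing that this correction term is precisely the obstruction to the full Stickelberger element $\theta_n$ itself being norm-coherent, and that passing to a nontrivial $\chi$-component is what removes it.
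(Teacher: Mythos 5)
Your proposal is correct and takes essentially the same approach as the paper: both reduce coherence to the distribution relation for the coefficients, establish the per-$\zeta$ identity $\sum_{j=0}^{p-1} s_{n+1}(\zeta\kappa^{a+jp^n}) = p\,s_n(\zeta\kappa^a) + \tfrac{p-1}{2}p^{n+2}$, and then invoke orthogonality of the nontrivial character $\chi$ to kill the leftover constant term. The only difference is cosmetic---you obtain the per-$\zeta$ identity by noting that the $\kappa^{jp^n}$ represent the cosets of $1+p^{n+2}\ZP$ in $1+p^{n+1}\ZP$ and that multiplication by the unit $\zeta\kappa^a$ permutes lifts, whereas the paper tracks the $(n+1)$-st $p$-adic digit directly; your version even states the identity in its correct form, where the paper's corresponding displays omit the factor $p^{n+1}$ and the main term $p\,s_n(\zeta\kappa^a)$ through apparent typos.
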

\begin{proof}
The elements $\theta_n(\chi)$ are coherent if and only if the map
\[ \mu_{\theta}^{\chi}(a+p^n \ZP) = \frac{-1}{p^{n+1}} \sum_{\zeta \in W} s_n(\zeta \kappa^a) \chi(\zeta)^{-1} \]
forms a distribution. Note that
\[ \sum_{i=0}^{p-1} \mu_{\theta}^{\chi}(a+ ip^n + p^{n+1}\ZP) = \frac{-1}{p^{n+2}} \sum_{\zeta \in W} \left( \sum_{i=0}^{p-1} s_{n+1}(\zeta \kappa^{a+ip^n}) \right) \chi(\zeta)^{-1},\]
while
\[ s_{n+1}(\kappa^{a+ip^n}\zeta) = s_n(\zeta \kappa^a) + s_0(x_{n+1} + i y_0)  \]
where $x_{n+1}$ (resp. $y_0$) is the $n+1$-st (resp. $0$-th) coordinate in the $p$-adic expansion of $\zeta \kappa^a$ (resp. $\zeta$). As $i$ varies between $0$ and $p-1$, so does $s_0(x_{n+1} + i y_0)$, hence
\[ \sum_{i=0}^{p-1} s_{n+1}(\zeta \kappa^{a+ip^n}) = \frac{p(p-1)}{2}.\]
Since $\chi \neq 1$, it follows that
\[ \sum_{i=0}^{p-1} \mu_{\theta}^{\chi}(a + i p^n + p^{n+1}\ZP) = \mu_{\theta}^{\chi}(a+ p^n \ZP).\]
This completes the proof of the lemma.
\end{proof}

\begin{remark} If $\chi \neq 1$ is even, then
\[ \mu_{\theta}^{\chi}(a+ p^n\ZP) = \frac{-1}{p^{n+1}} \sum_{\zeta \in W} s_n(-\zeta \kappa^a) \chi(\zeta)^{-1}.\]
But $s_n(-x) = p^{n+1} - s_n(x)$ which implies $\mu_{\theta}^{\chi}(a+p^n \ZP) = - \mu_{\theta}^{\chi}(a+p^n\ZP)$. So if $\chi$ is even, then $\mu_{\theta}^{\chi}(a+p^n \ZP) =0$ for all $a$. This shows us that in order to obtain interesting results, we must take $\chi$ to be an \emph{odd} character. Therefore, we assume $\chi$ is odd for the remainder of this section.
\end{remark}

Now that we have a coherent sequence $\theta_n(\chi)$ and a matching distribution $\mu_{\theta}^{\chi}$ we'd like to know what analytic functions come with them. But unfortunately, $\mu_{\theta}^{\chi}$ is generally an unbounded distribution. On the other hand, this distribution was born out of Stickelberger elements and we know how to \emph{integralize} them. To boot, the \emph{integralizers} $1 - c\sigma_c^{-1}$ are, in fact, coherent themselves! This is a crucial observation.

Let $c$ be an integer not equal to $\pm 1$. Let $\theta_n(\chi_c)$ denote $\chi$-component of $(1-c\sigma_c^{-1}) \theta_n$:
\[ \theta_n(\chi_c) = \sum_{a=0}^{p^n-1} \left( \frac{-1}{p^{n+1}} \sum_{\zeta \in W} r_n(\zeta \kappa^a) \chi(\zeta)^{-1} \right) \gamma_n^{-a} \]
where for every $x \in \ZP$
\[ r_n(x) = \frac{s_n(x) - s_n(xc^{-1}) c}{p^{n+1}} \in \mbb{Z}.\]
Using the Fourier transform and the fact that under this map $\gamma_n^{-a} \mapsto T^a \bmod{1 - T^{p^n}}$, we get
\begin{align*}
F_{\theta}^{\chi_c}(\kappa^{m-1}) &\equiv - \sum_{a=0}^{p^n-1} \sum_{\zeta \in W} r_n(\zeta \kappa^a) \chi(\zeta)^{-1} \kappa^{a(m-1)} \bmod{p^{n+1}} \\
&\equiv - \sum_{\substack{b=1 \\ (b,p)=1}}^{p^{n+1}} b^{m-1} r_n(b) \cdot \chi^* \omega^{-m}(b)
\end{align*}
where $\chi \chi^* = \omega$. The last line follows from the fact that $\zeta \kappa^{a(m-1)} = \omega^{-m+1}(\zeta) (\zeta \kappa^a)^{m-1}$. Notice the change in parity; $\chi^*$ is an even character. The idea here is to try to take advantage of the following fact
\[ \lim_{n \to \infty} \frac{1}{p^{n+1}} \sum_{\substack{b=1\\(b,p)=1}}^{p^{n+1}} b^m \cdot \chi^* \omega^{-m}(b) = (1-\chi^*\omega^{-m}(p) p^{m-1}) B_{m,\chi^*\omega^{-m}}.\]
We use the congruence
\[ b^{m-1} r_n(b) \cdot \chi^* \omega^{-m}(b) \equiv \frac{1}{m p^{n+1}} \left( s_n(b)^m - s_n(bc^{-1})^mc^m \right) \bmod{p^{n+1}} \]
to obtain
\[ F_{\theta}^{\chi_c} (\kappa^{m-1}) \equiv \frac{-(1- c^m \chi^*\omega^{-m}(c))}{m} \cdot \frac{1}{p^{n+1}} \sum_{\substack{b=1\\ (b,p)=1}}^{p^{n+1}} b^m \cdot \chi^* \omega^{-m}(b) \bmod{p^{n+1}}.\]
Taking limits we get
\[ \frac{F_{\theta}^{\chi_c}( \kappa^{m-1} ) }{1 - c^m \cdot \chi^* \omega^{-m}(c)} = L_p(1-m,\chi^*).\]
Choose $c$ so that $\langle c \rangle = \kappa$. Then we have
\begin{theorem}[Iwasawa \cite{I}] Let $L_{\theta^-}^{\chi}(T)$ be the function defined by
\[ L_{\theta^-}^{\chi}(T) = \frac{F_{\theta}^{\chi_c}(T)}{1 - \kappa c\chi^{-1}(c) \cdot T}.\]
Then $L_{\theta^-}^{\chi}(\kappa^s) = L_p(-s,\chi^*)$.
\end{theorem}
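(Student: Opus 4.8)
The plan is to promote the interpolation identity already in hand to an equality of $p$-adic analytic functions by a density argument. Recall we have shown, for every positive integer $m$,
\[ \frac{F_{\theta}^{\chi_c}(\kappa^{m-1})}{1 - c^m\,\chi^*\omega^{-m}(c)} = L_p(1-m,\chi^*). \]
Setting $s = m-1$, the numerator becomes $F_{\theta}^{\chi_c}(\kappa^{s})$ and the right-hand side becomes $L_p(-s,\chi^*)$, so the only things standing between this and the asserted formula $L_{\theta^-}^{\chi}(\kappa^s) = L_p(-s,\chi^*)$ are (i) recognizing the Euler factor $1 - c^m\chi^*\omega^{-m}(c)$ as the denominator of $L_{\theta^-}^{\chi}$ evaluated at $T = \kappa^{s}$, and (ii) passing from the integers $s \in \mbb{Z}_{\geq 0}$ to all of $\ZP$.

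For (i) I would simplify the Euler factor using the two relations at our disposal: $\chi\chi^* = \omega$, which gives $\chi^* = \omega\chi^{-1}$ and hence $\chi^*\omega^{-m}(c) = \chi^{-1}(c)\,\omega(c)^{1-m}$; and the choice $\langle c\rangle = \kappa$, which writes $c = \omega(c)\kappa$ in $\ZP^{\times}$ and hence $c^m = \omega(c)^m\kappa^m$. Multiplying the two expressions, the powers of the Teichm\"uller factor $\omega(c)$ telescope to a single copy, and the surviving power of $\kappa$ is linear in $\kappa^{m-1} = \kappa^s = T$. Thus $1 - c^m\chi^*\omega^{-m}(c)$ collapses to the single binomial that is the denominator of $L_{\theta^-}^{\chi}(T)$, and we obtain $L_{\theta^-}^{\chi}(\kappa^{m-1}) = L_p(1-m,\chi^*) = L_p(-s,\chi^*)$ for every positive integer $m$, i.e.\ for every $s$ in the dense subset $\mbb{Z}_{\geq 0} \subset \ZP$.

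For (ii) I would invoke continuity and density. The set of admissible $s$ is dense in $\ZP$, and both sides extend to continuous $\mbb{C}_p$-valued functions of $s$: the map $s \mapsto \kappa^s$ is continuous, $F_{\theta}^{\chi_c}$ converges on $1+p\ZP$ because it lies in $\mfrak{o}\llbracket T-1\rrbracket$ (the integralized distribution $\mu_{\theta}^{\chi_c}$ being bounded) and $\kappa^s \in 1+p\ZP$, and the denominator $1 - \kappa c\chi^{-1}(c)\kappa^s$ is a $p$-adic unit, so $s \mapsto L_{\theta^-}^{\chi}(\kappa^s)$ is continuous; while $s \mapsto L_p(-s,\chi^*)$ is continuous by the construction of the $p$-adic $L$-function. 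Two continuous functions on $\ZP$ that agree on a dense set are equal, which is the theorem.

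The main obstacle is the bookkeeping in step (i): one must track the Teichm\"uller twist $\omega(c)$ and the exact power of $\kappa$ carefully so that the product of characters really does collapse to the stated linear denominator $1 - \kappa c\chi^{-1}(c)T$, rather than to a factor differing by a stray power of $\kappa$ or a misplaced $\omega(c)$. A secondary point is the legitimacy of the division: one should confirm the denominator is a unit on the relevant range (so that $L_{\theta^-}^{\chi}(\kappa^s)$ is genuinely continuous), treating the exceptional case $\chi = \omega$, where $\chi^* = 1$ and both sides acquire a matching pole, as an equality of meromorphic functions. Once these are pinned down, the density argument is formal.
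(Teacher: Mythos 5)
Your two-step plan --- simplify the Euler factor after choosing $\langle c\rangle=\kappa$, then pass from non-negative integers to all of $\ZP$ by continuity and density --- is exactly the route the paper takes (the paper compresses both steps into the single line ``Choose $c$ so that $\langle c\rangle=\kappa$''), and your step (ii) is handled correctly. The gap is in step (i), which you yourself flag as the main obstacle and then assert rather than execute; when executed, it does not give what you claim. From $\chi^*=\omega\chi^{-1}$ one gets $\chi^*\omega^{-m}(c)=\chi^{-1}(c)\,\omega(c)^{1-m}$, and from $c=\omega(c)\kappa$ one gets $c^m=\omega(c)^m\kappa^m$; multiplying,
\[ c^m\,\chi^*\omega^{-m}(c) \;=\; \omega(c)\,\chi^{-1}(c)\,\kappa^{m} \;=\; \bigl(\omega(c)\kappa\bigr)\,\chi^{-1}(c)\,\kappa^{m-1} \;=\; c\,\chi^{-1}(c)\,T, \qquad T=\kappa^{m-1}. \]
So the Euler factor collapses to $1-c\chi^{-1}(c)\,T$, \emph{not} to the printed denominator $1-\kappa c\chi^{-1}(c)\,T$: the two differ by exactly one power of $\kappa$, i.e.\ by precisely the ``stray power of $\kappa$'' you warned against. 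Your argument, completed honestly, therefore proves the interpolation property for $F_{\theta}^{\chi_c}(T)/(1-c\chi^{-1}(c)T)$, and this is incompatible with the printed statement: if both quotients agreed with $L_p(-s,\chi^*)$ at every $T=\kappa^s$, then $F_{\theta}^{\chi_c}$ would vanish on $\kappa^{\ZP}$, hence be identically zero by Weierstrass preparation, which is false since $L_p(1,\chi^*)\neq 0$.

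As for the diagnosis: the paper's own derivation (the congruence expressing $F_{\theta}^{\chi_c}(\kappa^{m-1})$ as $-(1-c^m\chi^*\omega^{-m}(c))/m$ times the partial Bernoulli sum, followed by the limit formula) supports the denominator $1-c\chi^{-1}(c)T$, so the printed theorem almost certainly carries a typographical extra $\kappa$ --- an easy slip, since the correct factor is $1-\omega(c)\chi^{-1}(c)\,\kappa T$ and conflating $\omega(c)$ with $c$ produces the printed one, whereas the correct identification is $\omega(c)\kappa = c$. A complete solution here must either carry out the computation and prove the corrected statement, explicitly noting the discrepancy with the text, or else explain why the printed denominator is nonetheless the right one; it cannot simply assert that the bookkeeping ``collapses to the stated linear denominator,'' because that assertion fails at the one step on which the whole theorem rests.
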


\begin{remark} So the $p$-adic $L$-function attached to $\chi^*$ (an even character) arises from the inverse limit of the $\chi$-parts (an odd character) of the classical Stickelberger elements $\theta_n$. Note that we can choose $c$ so that $1 - \kappa c \chi^{-1}(c) \cdot T$ is invertible (in $\ZP\llbracket T-1 \rrbracket$) if and only if $\chi \neq \omega$. So if $\chi \neq \omega$ (i.e., $\chi^* \neq 1$), we have $L_{\theta^-}^{\chi}(T) \in \ZP\llbracket T-1 \rrbracket$ and the isomorphisms
\begin{equation} \label{functional1} \frac{\ZP(\Gamma)}{(\mu_{\theta}^{\chi})} \simeq \frac{\ZP\llbracket \Gamma \rrbracket}{\big( \theta(\chi) \big)} \simeq \frac{\ZP\llbracket T-1 \rrbracket}{\big( L_{\theta^-}^{\chi}(T) \big)}.\end{equation}
This is a very satisfying result. Given \Cref{classnoformula,padicclassnoformula} and a host of other instances where $L$-functions appear to comment on algebraic structure, one wonders whether it's all just a cosmic coincidence or if it's a symptom of a deeper connection. This theorem points us towards the latter. In lieu of \Cref{classicStickelberger}, one might even dare to hope that $L_{\theta^-}^{\chi}(T)$ is essentially the characteristic of the $\ZP\llbracket \Gamma \rrbracket \simeq \ZP \llbracket T-1 \rrbracket$-module $e_{\chi} \varprojlim A_n$ where $A_n$ denotes the $p$-part of the class group of $k_n$. Miraculously, this is all true and proven by Mazur and Wiles \cite{MW}.

Since $L_{\theta^-}^{\chi} (T)$ annihilates $e_{\chi} \varprojlim A_n$, it follows that the twisted power series $L_{\theta^-}^{\chi} \left( \frac{1}{\kappa T} \right)$ annihilates $e_{\chi^*} \varprojlim A_n^+$ where $A_n^+$ denotes the $p$-part of the class group of $k_n^+$. Here's why: consider the $\chi^*$-part of $\mathscr{X}_{\infty}$, the Galois group of the maximal abelian pro-$p$ extension of $\mbb{Q}(\zeta_{p^{\infty}})^+$ unramified outside $p$. Note that $e_{\chi^*} \mathscr{X}_{\infty}$ contains $e_{\chi^*} \varprojlim A_n^+$ as a quotient, what's more, $e_{\chi^*} \mathscr{X}_{\infty}$ is isomorphic to the Kummer dual of the $\chi$-part of $\varinjlim A_n$. But this latter module is essentially a twisted version of $e_{\chi} \varprojlim A_n$ from which we may derive the claimed annihilation.

In the next section, we construct \emph{explicit} annihilators of $A_n^+$ for every level $n \geq 0$ in the vein of the classical Stickelberger theorem. We begin with an explicit element that could be considered a ``real'' Gauss sum, and we explain how the factorization of this element gives rise to annihilators of $A_n^+$. These are new and build upon our work in \cite{A}. We then show that these annihilators can be induced to give rise to the aforementioned twisted $p$-adic $L$-functions in a manner that parallels Iwasawa's work. This is interesting in its own right, but also indicates that these annihilators are, in a sense, full bodied.
\end{remark}

\section{The Plus Side}

Let $k_n^+$ denote the maximal real subfield of $k_n$, and associate $\Gamma_n$ with $\Gal(k_n^+/k_0^+)$. For every positive integer $n$, we let
\[ \delta_n(T) = \frac{\zeta_{p^{n+1}}^{g\kappa} -T}{\zeta_{p^{n+1}} -T} \]
where $g$ is a primitive root mod $p$. We write $\delta_n$ for $\delta_n(1)$, $K_n$ for $\mbb{Q}_p(\zeta_{p^{n+1}})$, and $\mfrak{o}_n$ for $\mbb{Z}_p[\zeta_{p^{n+1}}]$, the valuation integers of $K_n$.

\subsection{Stickelberger Elements} Let $\ell$ be a rational prime congruent to $1 \bmod{p^{n+1}}$, and let $\tau$ be a generator for $\Gal(k_n^+(\zeta_{\ell})/k_n^+)$. Consider the ``Gauss sum''
\[ g_n^+(\ell) = - \sum_{b=1}^{\ell -1} \delta_n(\zeta_{\ell})^{N_b} \zeta_{\ell}^{\tau^b} \]
where
\[ N_b = 1 + \tau + \cdots + \tau^{b-1} \in \mbb{Z}[\Gal(k_n^+(\zeta_{\ell})/k_n^+)].\]
The following lemma explains why we call this a Gauss sum.
\begin{lemma} The element $g_n^+(\ell)$ is non-zero for some choice of $\zeta_{\ell}$. What's more, we have $\delta_n(\zeta_{\ell}) \cdot g_n^+(\ell)^{\tau} = g_n^+(\ell)$.
\end{lemma}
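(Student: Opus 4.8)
The plan is to deduce both assertions from a single cyclic-product identity, after fixing conventions. First I would lift $\tau$ to the generator of $\Gal(k_n(\zeta_\ell)/k_n)$ corresponding to it under the restriction isomorphism $\Gal(k_n(\zeta_\ell)/k_n) \xrightarrow{\sim} \Gal(k_n^+(\zeta_\ell)/k_n^+)$, which exists because $k_n \cap k_n^+(\zeta_\ell) = k_n^+$; this lift fixes $\zeta_{p^{n+1}}$ and moves only $\zeta_\ell$. Writing $u_j := \delta_n(\zeta_\ell^{\tau^j}) = \delta_n(\zeta_\ell)^{\tau^j}$ and $P_b := \prod_{j=0}^{b-1} u_j = \delta_n(\zeta_\ell)^{N_b}$, we have $g_n^+(\ell) = -\sum_{b=1}^{\ell-1} P_b\,\zeta_\ell^{\tau^b}$. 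The key identity is $\prod_{j=0}^{\ell-2} u_j = 1$: since $\tau$ has order $\ell-1$, the arguments $\zeta_\ell^{\tau^j}$ ($0 \le j \le \ell-2$) run over all primitive $\ell$-th roots of unity, so the product equals $\prod_{\xi^\ell=1,\,\xi\ne 1}\delta_n(\xi)$, and applying $\prod_{\xi\ne 1}(X-\xi) = (X^\ell-1)/(X-1)$ to numerator and denominator turns this into $\frac{(\zeta_{p^{n+1}}^{g\kappa\ell}-1)/(\zeta_{p^{n+1}}^{g\kappa}-1)}{(\zeta_{p^{n+1}}^{\ell}-1)/(\zeta_{p^{n+1}}-1)}$. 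The hypothesis $\ell\equiv 1 \bmod p^{n+1}$ gives $\zeta_{p^{n+1}}^{\ell}=\zeta_{p^{n+1}}$ and $\zeta_{p^{n+1}}^{g\kappa\ell}=\zeta_{p^{n+1}}^{g\kappa}$, so the expression collapses to $1$.

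For the transformation law I would apply $\tau$ and reindex. Using $P_b^\tau = P_{b+1}/u_0$ we get $\delta_n(\zeta_\ell)\,g_n^+(\ell)^\tau = u_0\,g_n^+(\ell)^\tau = -\sum_{b=2}^{\ell} P_b\,\zeta_\ell^{\tau^b}$, and subtracting the defining sum for $g_n^+(\ell)$ cancels the terms with $2\le b\le \ell-1$, leaving $P_\ell\,\zeta_\ell^{\tau^\ell}-P_1\,\zeta_\ell^{\tau}$. Now $\zeta_\ell^{\tau^\ell}=\zeta_\ell^{\tau}$ because $\tau^{\ell-1}=1$, while $P_\ell = \big(\prod_{j=0}^{\ell-2}u_j\big)\,u_{\ell-1}=1\cdot u_0 = P_1$ by the identity above together with $u_{\ell-1}=u_0$. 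Hence the difference is zero and $\delta_n(\zeta_\ell)\,g_n^+(\ell)^\tau = g_n^+(\ell)$.

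For nonvanishing -- which is where I expect the real work to hide -- I would avoid any Stickelberger-style valuation at a prime above $\ell$ and instead reduce modulo a prime $\mathfrak p$ of $k_n(\zeta_\ell)$ over $p$. There $\zeta_{p^{n+1}}\equiv 1$, while each $\zeta_\ell^{\tau^j}$ reduces to a primitive $\ell$-th root of unity $\ne 1$ (as $\ell\ne p$); hence every $u_j=(\zeta_{p^{n+1}}^{g\kappa}-\zeta_\ell^{\tau^j})/(\zeta_{p^{n+1}}-\zeta_\ell^{\tau^j})$ is a $\mathfrak p$-unit congruent to $1$, so $P_b\equiv 1 \pmod{\mathfrak p}$ for all $b$. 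Therefore $g_n^+(\ell)\equiv -\sum_{b=1}^{\ell-1}\zeta_\ell^{\tau^b}\equiv -(-1)=1\pmod{\mathfrak p}$, the inner sum being the sum of all primitive $\ell$-th roots of unity. Thus $g_n^+(\ell)$ is a $\mathfrak p$-unit, in particular nonzero; and since replacing $\zeta_\ell$ by any other primitive root merely conjugates $g_n^+(\ell)$ by a power of $\tau$, the clause ``for some choice of $\zeta_\ell$'' is automatic.

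The hard part is really organizational rather than conceptual: I must pin down the lift of $\tau$ so that it fixes $\zeta_{p^{n+1}}$ (this is what legitimizes writing $\delta_n(\zeta_\ell)^{N_b}$ and the identification $u_j=\delta_n(\zeta_\ell)^{\tau^j}$), and I must track indices carefully so that exactly the boundary terms at $b=1$ and $b=\ell$ survive the cancellation and are then matched by the cyclic-product identity. Once those two points are handled, both halves of the lemma drop out.
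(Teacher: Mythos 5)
Your proposal is correct, and it splits the lemma the same way the paper does, but the two halves compare differently. The transformation law is handled essentially as in the paper: the same reindexing via $N_b \cdot \tau = N_{b+1}-1$, with the boundary terms at $b=1$ and $b=\ell$ matched using $N_\ell = N + N_1$ and the norm relation $\delta_n(\zeta_\ell)^N = 1$. The only difference there is that the paper cites \cite[Theorem 3.1]{A} for that relation, whereas you prove it on the spot from $\prod_{\xi^\ell=1,\,\xi\neq 1}(X-\xi)=(X^\ell-1)/(X-1)$ and $\ell \equiv 1 \bmod{p^{n+1}}$; your explicit choice of the lift of $\tau$ fixing $\zeta_{p^{n+1}}$ is also a genuine clarification that the paper passes over silently, and it is needed even to make sense of $\delta_n(\zeta_\ell)^{N_b}$. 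The non-vanishing half is where you truly diverge. The paper fixes $\zeta_\ell = e^{2\pi i/\ell}$ and argues by linear algebra over $\mathbb{C}$: if all the (additively twisted) sums vanished, the vector $\big(\delta_n(\zeta_\ell)^{N_b}\big)_b$ would lie in the one-dimensional kernel of the Vandermonde-type matrix $A$, forcing $\delta_n(\zeta_\ell) = (1-\zeta_\ell^{\tau})/(1-\zeta_\ell)$, an impossibility that the paper leaves implicit (the right-hand side lies in $\mathbb{Q}(\zeta_\ell)$, while $\delta_n(\zeta_\ell)$ does not). You instead reduce modulo a prime $\mathfrak{p}$ above $p$, where $\zeta_{p^{n+1}} \equiv 1$ while $\zeta_\ell$ remains a root of unity of order $\ell \neq p$, so each $u_j$ is a $\mathfrak{p}$-unit congruent to $1$ and $g_n^+(\ell) \equiv -\sum_b \zeta_\ell^{\tau^b} \equiv 1 \pmod{\mathfrak{p}}$. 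Your route buys more: non-vanishing for \emph{every} choice of $\zeta_\ell$ (the lemma only claims some choice), the extra fact that $g_n^+(\ell)$ is a unit at all primes above $p$, and it sidesteps the one step the paper does not justify. What the paper's route offers in exchange is the precise algebraic meaning of total vanishing --- the sum can only degenerate if $\delta_n(\zeta_\ell)$ collapses to a cyclotomic unit of $\mathbb{Q}(\zeta_\ell)$ --- which is closer in spirit to classical Gauss-sum arguments. Both proofs are sound; yours is the more self-contained.
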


\begin{proof} Let $\zeta_{\ell} = e^{2\pi i/\ell}$. Let $B$ be the $(\ell-1)\times(\ell-1)$-matrix
\[ B = \begin{bmatrix} 1 & 1 & \cdots & 1 \\ \zeta_{\ell}^{\tau} & \zeta_{\ell}^{\tau^2} & \cdots & \zeta_{\ell}^{\tau^{\ell-2}} \\ \vdots & \vdots & \ddots & \vdots \\ \zeta_{\ell}^{\ell-2} & \zeta_{\ell}^{(\ell-2)\tau} & \cdots & \zeta_{\ell}^{(\ell-2)\tau^{\ell-2}} \end{bmatrix} \]
Now let $A$ be the matrix $B$ with the first row removed. Since $\det B \neq 0$, it follows that $\dim \ker A = 1$. In fact, $\ker A$ is the span of the column vector
\[ \begin{bmatrix} 1 - \zeta_{\ell} \\ \vdots \\ 1 - \zeta_{\ell}^{\tau^{\ell-2}} \end{bmatrix},\ \text{so } A \cdot \begin{bmatrix} \delta_n(\zeta_{\ell})^{N_1} \\ \vdots \\ \delta_n(\zeta_{\ell})^{N_{\ell-1}} \end{bmatrix} = \vec{0}\ \text{  implies that  }\  \delta_n(\zeta_{\ell})= \frac{1-\zeta_{\ell}^{\tau}}{1-\zeta_{\ell}}.\]
Now, using the fact that $N_b \cdot \tau = N_{b+1} -1$, we see that
\[ g_n^+(\ell)^{\tau} = - \delta_n(\zeta_{\ell})^{-1} \sum_{c=2}^{\ell} \delta_n(\zeta_{\ell})^{N_c} \zeta_{\ell}^{\tau^c}.\]
Since $N_{\ell} = N + N_1$ where $N$ is the norm, and $\delta_n(\zeta_{\ell})^N =1$ (see \cite[Theorem 3.1]{A}) , our lemma follows.
\end{proof}

\begin{remark} Note that if one replaces $k_n^+$ with $k_n$ and $\delta_n(\zeta_{\ell})$ with $\zeta_{p^{n+1}}$, then we obtain the classical Gauss sum $g_n(\ell)$. \end{remark}

The fact that $(g_n^+(\ell))$ is what Hilbert would call an ``ambiguous'' ideal is what enables its factorization to illuminate ideal relations; much like the classical Gauss sum $g_n(\ell)$. Unlike the classical case, the part of the factorization of $g_n^+(\ell)$ that depends on $\ell$ is much more difficult to isolate.

Toward that end, let $\alpha: E_n^+ \to \ZP[\Gal(k_n^+/\mbb{Q})]$ be a Galois module map where $E_n^+$ denotes the units of $k_n^+$ and fix an ideal class $\mfrak{c}$ in the Sylow $p$-subgroup of $\mfrak{C}(k_n^+)$, the ideal class group. For every integer $m > n$, let $\alpha_m(x)$ denote the group ring element of $\mbb{Z}[\Gal(k_n^+/\mbb{Q})]$ arrived at by applying the map $s_m$ to the coefficients of $\alpha(x)$. Then there exists a rational integer $\ell \equiv 1 \bmod{p^m}$ and a prime ideal $\lambda \mid \ell$ of $k_n^+$, such that
\[ \big( N(g_n^+(\ell))^{\tilde{\alpha}} \big) = \lambda^{\alpha_m(\delta_n)} \mfrak{b}^{\ell-1} \]
where $\mfrak{b}$ is some integral ideal, $\lambda \in \mfrak{c}$, and $\tilde{\alpha}$ is, for lack of a better word, a ``fudge-factor''. It follows that $\alpha(\delta_n)$ annihilates $\mfrak{C}(k_n^+) \otimes \ZP$. In fact, it's shown in \cite[Theorem 3.8]{A} that if $\alpha: E_n^+ \to \mfrak{o}_n[\Gal(k_n^+)/\mbb{Q}]$ is a Galois module map, then $\alpha(\delta_n)$ annihilates $\mfrak{C}(k_n^+) \otimes \mfrak{o}_n$. These ideas find their beginnings in a paper of Thaine \cite{Th} (see also \cite{R}).

So all we need for a Stickelberger-esque theorem in this setting is an explicit map $\alpha$ to complete the recipe. Consider the map
\begin{align*}
\alpha: k_n^{\times} &\to K_n[\Gal(k_n/\mbb{Q})] \\
x &\mapsto \sum_{\substack{a=1 \\ (a,p)=1}}^{p^{n+1}} \log_p(x^{\sigma_a}) \sigma_a^{-1},
\end{align*}
where $\log_p$ is the Iwasawa logarithm. Let $W_n \subseteq E_n$ denote the roots of unity of $k_n$ contained in the units of $k_n$, respectively. Since $E_n = W_n E_n^+$, it follows that $\alpha(E_n) = \alpha(E_n^+)$. We let $\mfrak{o}_n[\Gal(k_n/\mbb{Q})]$ act on any $\mfrak{o}_n[\Gal(k_n^+/\mbb{Q})]$-module by restriction. Let $\Theta_n = \alpha(\delta_n)$. The element $\Theta_n$ is a handsome analog of the classical Stickelberger element:

\begin{theorem}[{\cite[Theorem 1.1]{A}}]
For any $\beta \in K_n[\Gal(k_n/\mbb{Q})]$ such that $\beta \cdot \alpha(E_n)=\beta \cdot \alpha(E_n^+)$ is contained in $\mfrak{o}_n[\Gal(k_n/\mbb{Q})]$, we have that $\beta \Theta_n$ annihilates $\mfrak{C}(k_n^+) \otimes \mfrak{o}_n$.
\end{theorem}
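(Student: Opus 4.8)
The plan is to deduce this from the integral annihilation statement \cite[Theorem 3.8]{A} recalled above: for any Galois module map $\alpha_0 : E_n^+ \to \mfrak{o}_n[\Gal(k_n^+/\mbb{Q})]$, the element $\alpha_0(\delta_n)$ annihilates $\mfrak{C}(k_n^+) \otimes \mfrak{o}_n$. The explicit $\log_p$-map $\alpha$ defined above fails the hypotheses of that theorem on two counts — its values lie in $K_n[\Gal(k_n/\mbb{Q})]$ rather than in the integral group ring, and it is indexed by the full group $\Gal(k_n/\mbb{Q})$ rather than by $\Gal(k_n^+/\mbb{Q})$ — and the role of $\beta$, together with the restriction action set up above, is exactly to repair both defects. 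So the first task is to record the formal properties of $\alpha$, and then to integralize and descend.

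First I would check that $\alpha$ is a Galois module homomorphism that kills roots of unity. Additivity $\alpha(xy) = \alpha(x) + \alpha(y)$ is immediate from $\log_p$ being a homomorphism. Galois-equivariance $\alpha(x^{\sigma_b}) = \sigma_b\,\alpha(x)$ follows by reindexing the defining sum: writing $\sigma_b\sigma_a = \sigma_{ab}$ and substituting $c \equiv ab \bmod p^{n+1}$ turns $\sum_a \log_p(x^{\sigma_b\sigma_a})\sigma_a^{-1}$ into $\sigma_b \sum_c \log_p(x^{\sigma_c})\sigma_c^{-1}$. Finally, since $\log_p(\zeta) = 0$ for every root of unity $\zeta$, we have $\alpha(W_n) = 0$, whence $\alpha(E_n) = \alpha(E_n^+)$ because $E_n = W_n E_n^+$. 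The same observation handles the fact that $\delta_n$ is itself only a \emph{complex} cyclotomic unit (one checks that $\delta_n/\overline{\delta_n}$ is a root of unity): writing $\delta_n = wu$ with $w \in W_n$ and $u \in E_n^+$ gives $\Theta_n = \alpha(\delta_n) = \alpha(u)$, so $\Theta_n$ is genuinely a value of $\alpha$ on a real unit, and $\alpha_0(\delta_n)$ in the cited theorem is to be read through this convention.

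With these in hand I would integralize and descend. Because $\Gal(k_n/\mbb{Q})$ is abelian, multiplication by $\beta$ is a $\Gal(k_n/\mbb{Q})$-equivariant endomorphism of the group ring, so $\beta\alpha$ is again a Galois module homomorphism; the standing hypothesis $\beta\cdot\alpha(E_n) \subseteq \mfrak{o}_n[\Gal(k_n/\mbb{Q})]$ says precisely that $\beta\alpha$ now takes \emph{integral} values. Composing with the natural ring surjection $\pi : \mfrak{o}_n[\Gal(k_n/\mbb{Q})] \to \mfrak{o}_n[\Gal(k_n^+/\mbb{Q})]$ induced by restriction to $k_n^+$ yields a Galois module map $\pi\circ(\beta\alpha) : E_n^+ \to \mfrak{o}_n[\Gal(k_n^+/\mbb{Q})]$ of exactly the type \cite[Theorem 3.8]{A} requires. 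That theorem then gives that $\pi(\beta\Theta_n) = (\pi\circ\beta\alpha)(\delta_n)$ annihilates $\mfrak{C}(k_n^+)\otimes\mfrak{o}_n$. Since complex conjugation fixes $k_n^+$, the action of $\Gal(k_n/\mbb{Q})$ on $\mfrak{C}(k_n^+)\otimes\mfrak{o}_n$ factors through $\pi$ — this is the restriction action fixed above — so $\beta\Theta_n$ and $\pi(\beta\Theta_n)$ act identically, and the annihilation of $\beta\Theta_n$ follows.

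The substantive input here is the factorization of the real Gauss sum $g_n^+(\ell)$ underlying \cite[Theorem 3.8]{A}, which I am taking as given; the reduction itself is formal. Accordingly, the only point demanding care is the bookkeeping that makes the reduction legitimate: one must confirm that the two repairs — integralizing the $K_n$-valued $\log_p$ by $\beta$, and collapsing $\Gal(k_n/\mbb{Q})$ onto $\Gal(k_n^+/\mbb{Q})$ via $\pi$ — are mutually compatible and are faithfully reflected in the way $\beta\Theta_n$ acts on $\mfrak{C}(k_n^+)\otimes\mfrak{o}_n$. Verifying that $\pi\circ(\beta\alpha)$ is genuinely well defined and Galois-equivariant after both operations, and that no integrality is lost in applying $\pi$, is the step I would treat most carefully.
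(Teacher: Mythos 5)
Your proposal is correct and follows essentially the approach the paper intends: the paper gives no proof of this statement (it is quoted verbatim from \cite[Theorem 1.1]{A}), but its surrounding discussion --- the factorization of $\big(N(g_n^+(\ell))^{\tilde{\alpha}}\big)$ and the recalled \cite[Theorem 3.8]{A} --- is exactly the input your reduction uses. Your bookkeeping steps (integralizing $\alpha$ by $\beta$, descending to $\mfrak{o}_n[\Gal(k_n^+/\mbb{Q})]$ via $\pi$, invoking the restriction action so that $\beta\Theta_n$ and $\pi(\beta\Theta_n)$ act identically, and reading $\Theta_n = \alpha(u)$ for the real unit $u$ in the decomposition $\delta_n = wu$ with $w \in W_n$) are all sound.
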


\subsection{$p$-adic $L$-functions} Now, let $\chi \neq 1$. Let $\Theta_n(\chi) \in K_n[\Gamma_n]$ denote the $\chi$-component of the real Stickelberger element $\Theta_n$. Note that
\[ \left. \Theta_n \right|_{k_n^+} = \sum_{a=0}^{p^n-1} \sum_{\zeta \in W/\pm 1} \log_p \big( \delta_n^{\sigma_{s_n(\zeta \kappa^a)}+ \sigma_{s_n(-\zeta \kappa^a)}} \big) \sigma_{s_n(\zeta)}^{-1} \gamma_n^{-a}.\]
So if $\chi$ is odd, then $\Theta_n(\chi) = 0$. So we can only get interesting results if $\chi$ is an \emph{even} character in this setting. Therefore we assume $\chi$ is even for the remainder of this section. In this case, we have
\[ \Theta_n(\chi) = (\chi(g)\gamma_n-1) \sum_{a=0}^{p^n-1} \left( \sum_{\zeta \in W} \log_p\big( 1-\zeta_{p^{n+1}}^{\zeta \kappa^a} \big) \chi^{-1}(\zeta) \right) \gamma_n^{-a}. \]

\begin{lemma} The elements $\Theta_n(\chi)$ are coherent, i.e., the sequence $(\Theta_n(\chi))$ belongs to $\mbb{C}_p\llbracket \Gamma \rrbracket$.
\end{lemma}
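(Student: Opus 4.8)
The plan is to mimic the minus-side argument: exhibit the sequence $(\Theta_n(\chi))$ as a product of two coherent sequences, so that the verification of coherence reduces to a single distribution relation. Since $\mbb{C}_p\llbracket \Gamma \rrbracket = \varprojlim \mbb{C}_p[\Gamma_n]$ is a ring and the projection maps are ring homomorphisms, and since $(\chi(g)\gamma_n - 1)$ is manifestly coherent (it is the image of $\chi(g)\gamma - 1 \in \mbb{C}_p\llbracket \Gamma \rrbracket$), it suffices to prove that the inner factor
\[ \Xi_n := \sum_{a=0}^{p^n-1} c_n(a)\, \gamma_n^{-a}, \qquad c_n(a) := \sum_{\zeta \in W} \log_p\big(1 - \zeta_{p^{n+1}}^{\zeta\kappa^a}\big)\chi^{-1}(\zeta), \]
is coherent. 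In the dictionary of \S2 this is $\theta_\mu$ for $\mu(a + p^n\ZP) = c_n(a)$, so exactly as in Iwasawa's lemma, coherence of $(\Xi_n)$ is equivalent to the distribution relation
\[ c_n(a) = \sum_{i=0}^{p-1} c_{n+1}(a + ip^n) \qquad (0 \le a < p^n). \]

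The heart of the matter is to establish this relation, and here — unlike the minus side, where one was left with a constant term killed by $\chi \neq 1$ — the collapse is exact and follows from the additivity of the Iwasawa logarithm together with a norm/factorization identity. First I would fix compatible roots $\zeta_{p^{n+2}}^{\,p} = \zeta_{p^{n+1}}$ and use additivity of $\log_p$ to write, for each $\zeta$,
\[ \sum_{i=0}^{p-1} \log_p\big(1 - \zeta_{p^{n+2}}^{\zeta\kappa^{a+ip^n}}\big) = \log_p \prod_{i=0}^{p-1}\big(1 - \zeta_{p^{n+2}}^{\zeta\kappa^{a+ip^n}}\big). \]
Since $\kappa^{p^n}$ generates $1 + p^{n+1}\ZP$, we have $\zeta\kappa^{a+ip^n} \equiv \zeta\kappa^a(1 + ip^{n+1}\eta) \pmod{p^{n+2}}$ for some $\eta \in \ZP^\times$, whence $\zeta_{p^{n+2}}^{\zeta\kappa^{a+ip^n}} = \zeta_{p^{n+2}}^{\zeta\kappa^a}\, \zeta_p^{\, i\eta\zeta\kappa^a}$ with $\zeta_p = \zeta_{p^{n+2}}^{\,p^{n+1}}$. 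As $\eta\zeta\kappa^a$ is a $p$-adic unit, the factor $\zeta_p^{\, i\eta\zeta\kappa^a}$ runs over all $p$-th roots of unity as $i$ runs over $0,\dots,p-1$; the elementary factorization $\prod_{\xi^p=1}(1 - x\xi) = 1 - x^p$ then gives
\[ \prod_{i=0}^{p-1}\big(1 - \zeta_{p^{n+2}}^{\zeta\kappa^{a+ip^n}}\big) = 1 - \big(\zeta_{p^{n+2}}^{\zeta\kappa^a}\big)^p = 1 - \zeta_{p^{n+1}}^{\zeta\kappa^a}. \]
Summing over $\zeta \in W$ against $\chi^{-1}(\zeta)$ yields $\sum_{i=0}^{p-1} c_{n+1}(a+ip^n) = c_n(a)$, which is precisely the required relation; multiplying the resulting coherent sequence $(\Xi_n)$ by $(\chi(g)\gamma_n - 1)$ then completes the proof.

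I expect the main obstacle to be the bookkeeping in the factorization step: correctly reducing the exponent $\zeta\kappa^{a+ip^n}$ modulo $p^{n+2}$ and confirming that its top $p$-adic digit cycles through all residues mod $p$ as $i$ varies, so that the product over $i$ genuinely telescopes down to the level-$n$ term. This is the plus-side analogue of the identity $\sum_{i=0}^{p-1} s_{n+1}(\zeta\kappa^{a+ip^n}) = \tfrac{p(p-1)}{2}$ used by Iwasawa, with the additive statement about the $s_n$ replaced here by a multiplicative norm relation read off through $\log_p$. One should also verify at the outset that $1 - \zeta_{p^{n+1}}^{\zeta\kappa^a}$ lies in the domain of the Iwasawa logarithm — it does, as $\log_p$ is the homomorphism defined on all of $\mbb{C}_p^\times$ — so that the manipulations are legitimate; notably, the argument never uses $\chi \neq 1$, which is consistent with the relation holding on the nose rather than only after a character sum vanishes.
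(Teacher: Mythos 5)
Your proof is correct and takes essentially the same route as the paper: both reduce coherence to the distribution relation for the inner sum (since the factor $(\chi(g)\gamma_n - 1)$ is itself coherent), and both verify that relation by expanding $\kappa^{a+ip^n} \equiv \kappa^a(1+ip^{n+1}\eta) \bmod{p^{n+2}}$, converting the sum of logarithms into $\log_p$ of a product, and collapsing the product via $\prod_{i=0}^{p-1}(x - y\zeta_p^i) = x^p - y^p$. The only difference is cosmetic: you make explicit the bookkeeping (the unit $\eta$, the fact that $\zeta_p^{i\eta\zeta\kappa^a}$ sweeps out all $p$-th roots of unity, and that no cancellation from $\chi \neq 1$ is needed) that the paper leaves implicit.
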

\begin{proof} The elements $\Theta_n(\chi)$ are coherent if and only if the map
\[ M(a+p^n\ZP) = \sum_{\zeta \in W} \log_p \big( 1 - \zeta_{p^{n+1}}^{\zeta \kappa^a} \big) \chi^{-1}(\zeta) \]
forms a distribution since the sequence  $(\chi(g) \gamma_n-1)$ is coherent. Since $\kappa^{a+ip^n} \equiv \kappa^i (1+ip^{n+1}) \bmod p^{n+2}$, we have
\begin{align*}
\sum_{i=0}^{p-1} M(a + ip^n + p^{n+1}\ZP) &= \sum_{\zeta \in W} \chi^{-1}(\zeta) \sum_{i=0}^{p-1} \log_p \big( 1 - \zeta_{p^{n+2}}^{\chi(\zeta) \kappa^{a+i p^n}} \big) \\
&= \sum_{\zeta \in W}  \chi^{-1}(\zeta) \log_p \prod_{i=0}^{p-1} \big( 1 - \zeta_{p^{n+2}}^{\chi(\zeta)\kappa^a} \zeta_p^{i\chi(\zeta)} \big).
\end{align*}
Applying the relation
\begin{equation} \label{difference} \prod_{i=0}^{p-1} \big( x - y \zeta_p^i \big) = x^p - y^p,\end{equation}
we get that $M$ is a distribution, as claimed.
\end{proof}

Now, $\Theta_n(\chi)$ is a coherent sequence and consequently we have a distribution $\mu_{\Theta}^{\chi}$, but as in the classical setting, $\mu_{\Theta}^{\chi}$ is an unbounded distribution. What we need now is a collection of elements $\beta_n(\chi) \in \mfrak{o}_n[\Gamma_n]$ that \emph{integralize} the elements $\Theta_n(\chi)$; something akin to $1- c\sigma_c^{-1}$. For a fixed $\eta \in W$, consider
\[ \upsilon_n(\chi) = \sum_{a=0}^{p^n-1} \left( \sum_{\zeta \in W} \log_p \big( \eta - \zeta_{p^{n+1}}^{\zeta \kappa^a}\big) \chi^{-1}(\zeta) \right) \gamma_n^{-a}.\]
Since $\eta^p = \eta$, the same proof that shows $\Theta_n(\chi)$ are coherent also shows that $\upsilon_n(\chi)$ are coherent. The element $\upsilon_n(\chi)$ has an inverse in $K_n[\Gamma_n]$, say $\beta_n(\chi)$, if and only if the $\psi$-part of $\upsilon_n(\chi)$ is non-zero for every $\psi \in \widehat{\Gamma}_n$. Specifically, let $e_{\psi}$ denote the idempotent
\[ e_{\psi} = \frac{1}{p^n} \sum_{a=0}^{p^n-1} \psi(\kappa^a) \gamma_n^{-a}.\]
Let $\upsilon_n(\chi\psi) \in K_n$ such that $e_{\psi} \upsilon_n(\chi) = \upsilon_n(\chi\psi) e_{\psi}$. We have
\[ \upsilon_n(\chi\psi) = \sum_{a=0}^{p^n-1} \mu_{\upsilon}^{\chi}(a+p^n\ZP) \psi^{-1}(\kappa^a) = \int_{\ZP} \zeta_{\psi}^x\ \mathrm{d}\mu_{\upsilon}^{\chi}(x), \]
where $\zeta_{\psi} = \psi^{-1}(\kappa)$. So if for all $\psi\in \widehat{\Gamma}_n$ we have $\upsilon(\chi\psi) \neq 0$, then $\beta_n(\chi)$ exists. In particular, we have
\[ \beta_n(\chi) =\sum_{\psi \in \widehat{\Gamma}_n} \frac{e_{\psi}}{\upsilon_n(\chi\psi)} = \sum_{a=0}^{p^n-1} \left( \frac{1}{p^{n}} \sum_{\psi \in \widehat{\Gamma}_n} \frac{\psi(\kappa^a)}{\int_{\ZP} \zeta_{\psi}^x\ \mathrm{d}\mu_{\upsilon}^{\chi}(x)} \right) \gamma_n^{-a}.\]
Let $\mu_{\beta}^{\chi}$ be the associated distribution. The convolution $\mu_{\beta}^{\chi} * \mu_{\Theta}^{\chi}$ evaluated at $a+p^n\ZP$ is
\[ \sum_{b=0}^{p^n-1} \mu_{\Theta}^{\chi}(a+p^n \ZP) \left( \frac{1}{p^n} \sum_{\psi \in \widehat{\Gamma}_n} \frac{\zeta_{\psi}^{b-a}}{\int_{\ZP} \zeta_{\psi}^x\ \mathrm{d}\mu_{\upsilon}^{\chi}} \right). \]
Recollecting terms we arrive at
\begin{equation} \label{explicitStickelberger} \beta_n(\chi) \Theta_n(\chi) = \sum_{a=0}^{p^n-1} \left( \frac{1}{p^n} \sum_{\psi \in \widehat{\Gamma}_n} \zeta_{\psi}^{-a} \cdot \dfrac{ \int_{\ZP} \zeta_{\psi}^x\ \mathrm{d}\mu_{\Theta}^{\chi}(x) }{ \int_{\ZP} \zeta_{\psi}^x\ \mathrm{d}\mu_{\upsilon}^{\chi}(x) } \right) \gamma_n^{-a}.\end{equation}
We write $\vartheta_n(\chi)$ for $\beta_n(\chi)\Theta_n(\chi)$ and $\mu_{\vartheta}^{\chi}$ for the associated distribution. For any choice of $\eta$ (including $\eta=1$) and any choice of $c \in \ZP^{\times}$, it's straightforward to verify that $\mu_{\vartheta}^{\chi}$ is inert under $\sigma_{c}$. So $\mu_{\vartheta}^{\chi}$ is $\mbb{Q}_p$-valued. In fact, as we shall see, $\mu_{\vartheta}^{\chi}$ is $\ZP$-valued so \Cref{explicitStickelberger} describes an explicit and coherent sequence of annihilators in $\ZP\llbracket \Gamma \rrbracket$ of $\mfrak{C}(k_n^+) \otimes \ZP$.

\begin{proposition} For some choice of $p-1$-st root of unity $\eta$, the distribution $\mu_{\vartheta}^{\chi}$ is $\ZP$-valued. \end{proposition}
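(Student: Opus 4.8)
The plan is to reduce the statement to an integrality assertion over $\mfrak{o}_n$ and then to exploit the fact that, for a good choice of $\eta$, the element $\upsilon_n(\chi)$ is a \emph{generator} of the relevant isotypic piece rather than merely an element of it. Since we already know $\mu_{\vartheta}^{\chi}$ is $\mbb{Q}_p$-valued and $\mbb{Q}_p \cap \mfrak{o}_n = \ZP$, it suffices to prove that $\vartheta_n(\chi) = \beta_n(\chi)\Theta_n(\chi)$ lies in $\mfrak{o}_n[\Gamma_n]$ for every $n$. As $\beta_n(\chi) = \upsilon_n(\chi)^{-1}$ in the $\chi$-component, this is equivalent to the divisibility $\Theta_n(\chi) \in \upsilon_n(\chi)\,\mfrak{o}_n[\Gamma_n]$, i.e. that $\upsilon_n(\chi)$ divides $\Theta_n(\chi)$ in the group ring.

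First I would record the harmless simplification $\log_p\big(\eta - \zeta_{p^{n+1}}^{a}\big) = \log_p\big(1 - \eta^{-1}\zeta_{p^{n+1}}^{a}\big)$, valid because $\log_p \eta = 0$ for a root of unity $\eta$. For $\eta \neq 1$ the argument $1 - \eta^{-1}\zeta_{p^{n+1}}^{a}$ is a unit of $\mfrak{o}_n$, being $\equiv 1 - \eta^{-1} \not\equiv 0$ modulo the maximal ideal; hence, extending the formula defining $\alpha$ from $k_n^{\times}$ to the relevant $p$-adic units in $K_n^{\times}$, the element $\upsilon_n(\chi)$ is the $\chi$-component of $\alpha$ applied to the genuine $p$-adic cyclotomic unit $\eta - \zeta_{p^{n+1}}$. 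In particular both $\Theta_n(\chi) = e_{\chi}\alpha(\delta_n)$ and the family $\{\upsilon_n(\chi)\}_{\eta}$ lie in the single module $M_n := e_{\chi}\,\alpha(V_n)$, where $V_n$ is the subgroup of $K_n^{\times}$ generated by the global units $E_n^+$ together with the local units $\eta - \zeta_{p^{n+1}}$ for $\eta \in W$.

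The key structural step is to show that $M_n$ is free of rank one over $\mfrak{o}_n[\Gamma_n]$; here the hypothesis that $\chi \neq 1$ is even enters, via the fact that the $\chi$-part of the local one-units is free of rank one over $\mfrak{o}_n[\Gamma_n]$ and that $\log_p$ is injective on it (Leopoldt-type non-degeneracy), so that $M_n$, the image of a rank-one free module under an equivariant map, is again free of rank one. Granting this, the elements $\{\upsilon_n(\chi)\}_{\eta \in W}$ generate $M_n$, and a generator of a rank-one free module is detected modulo the maximal ideal $\mfrak{m}$: by Nakayama's lemma the $\upsilon_n(\chi)$ cannot all lie in $\mfrak{m}M_n$, so for at least one $\eta$ the element $\upsilon_n(\chi)$ generates, whence $M_n = \upsilon_n(\chi)\,\mfrak{o}_n[\Gamma_n]$. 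For that $\eta$, the containment $\Theta_n(\chi) \in M_n$ gives $\vartheta_n(\chi) = \upsilon_n(\chi)^{-1}\Theta_n(\chi) \in \mfrak{o}_n[\Gamma_n]$, as required; this also recovers, through \cite[Theorem 1.1]{A}, that $\vartheta_n(\chi)$ annihilates $\mfrak{C}(k_n^+)\otimes\ZP$.

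Two points will require care, and I expect the second to be the main obstacle. First, the generator criterion must be made explicit: the image of $\upsilon_n(\chi)$ in the one-dimensional space $M_n/\mfrak{m}M_n$ is a leading coefficient expressed as a character sum in $\eta$ modulo $\mfrak{m}$, and I must verify it is nonzero for some $\eta$ — a finite computation that I would carry out using the relation \Cref{difference} together with the non-degeneracy available for even $\chi \neq 1$. Second, and most delicately, the proposition asserts a \emph{single} $\eta$ that works simultaneously at every level, since $\mu_{\vartheta}^{\chi}$ lives on the whole tower. The hard part will be this uniformity, and I would resolve it by observing that the generator condition at level $n$ depends only on the residual leading term in $M_n/\mfrak{m}M_n$, which is governed by $\eta$ modulo $p$ and is therefore essentially a level-zero quantity; consequently the set of bad $\eta$ is a proper subset of $W$ independent of $n$, a single $\eta$ makes every $\vartheta_n(\chi)$ lie in $\mfrak{o}_n[\Gamma_n]$, and $\mu_{\vartheta}^{\chi}$ is $\ZP$-valued.
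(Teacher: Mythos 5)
Your reduction to showing $\vartheta_n(\chi)=\beta_n(\chi)\Theta_n(\chi)\in\mfrak{o}_n[\Gamma_n]$ is fine, but the module-theoretic core of your argument has a genuine gap, in two places. First, the freeness claim is misargued: the $\chi$-part of (the closure of) your group $V_n$ is a \emph{submodule} of the local principal units $e_{\chi}U_n$, not all of it, so $M_n=e_{\chi}\alpha(V_n)$ is a submodule --- not the image --- of the free rank-one module $A(e_{\chi}U_n)$; since $\mfrak{o}_n[\Gamma_n]\cong \mfrak{o}_n[X]/(X^{p^n}-1)$ is a local ring that is not a domain (let alone principal), submodules of cyclic modules need not be cyclic, so freeness of $M_n$ does not follow. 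Second, and more seriously, ``granting this, the elements $\{\upsilon_n(\chi)\}_{\eta}$ generate $M_n$'' is a non sequitur: $M_n$ contains $e_{\chi}\alpha(E_n^+)$ by construction, and freeness of $M_n$ says nothing about \emph{which} elements generate it; you give no reason why the logarithms of global units should lie in the $\mfrak{o}_n[\Gamma_n]$-span of the $\upsilon_n(\chi)$. Without that, Nakayama has nothing to run on. The statement you actually need --- that for a suitable $\eta$ the single element $u_n=\big((\eta-\zeta_{p^{n+1}})/\omega(\eta-1)\big)^{e_{\chi}}$ generates $e_{\chi}U_n$ over $\ZP[\Gamma_n]$ --- is a theorem of Iwasawa \cite{IW} (see also \cite{W}); the ``finite computation'' you defer is the substantive content of that theorem, not a routine verification, and \Cref{difference} is not the relevant tool for it.

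That citation is exactly how the paper proceeds, and once you have it the Nakayama detour is unnecessary. The paper's proof runs: the circular unit $c_n$ (essentially $\delta_n^{(p-1)e_{\chi}}$ normalized by a root of unity, cf.\ \cite{SStickelberger}) lies in $e_{\chi}U_n=u_n^{\ZP[\Gamma_n]}$, so $c_n=u_n^{(p-1)\vartheta_n(\chi)}$ with $\vartheta_n(\chi)\in\ZP[\Gamma_n]$; applying the equivariant logarithm $A$ (which kills roots of unity) turns this multiplicative relation into $\Theta_n(\chi)=\vartheta_n(\chi)\upsilon_n(\chi)$, giving $\ZP$-integrality directly, with no detour through $\mfrak{o}_n$. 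Note also that you never address invertibility of $\upsilon_n(\chi)$ in $K_n[\Gamma_n]$, which is needed for $\beta_n(\chi)$, and hence $\mu_{\vartheta}^{\chi}$, to exist at all; the paper gets this from $L_p(1,\chi\psi)\neq 0$, which forces $\int_{\ZP}\zeta_{\psi}^x\,\mathrm{d}\mu_{\Theta}^{\chi}\neq 0$, combined with the factorization $\Theta_n(\chi)=\vartheta_n(\chi)\upsilon_n(\chi)$. Finally, your worry about a single $\eta$ working at every level disappears once the right input is used: since $\eta^p=\eta$, the elements $\eta-\zeta_{p^{n+1}}$ are norm-coherent, and Iwasawa's theorem is a statement about the whole tower, so one $\eta$ serves all $n$.
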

\begin{proof} Let $U_n$ denote the principal units of $K_n$ and $C_n$ the topological closure of the principal circular units of $k_n$ in $U_n$. There is a choice of $\eta$ such that
\[ u_n = \left( \frac{\eta- \zeta_{p^{n+1}}}{\omega(\eta -1)}\right)^{e_{\chi}} \]
is a $\ZP[\Gamma_n]$ generator for $e_{\chi}U_n$ (see \cite{W,IW}). What's more, the element
\[ c_n= \left( \zeta_{p^{n+1}}^{(1-g\kappa)/2} \frac{\zeta_{p^{n+1}}^{g \kappa} -1}{\zeta_{p^{n+1}} -1} \right)^{(p-1)\cdot e_{\chi}} \]
is a $\ZP[\Gamma_n]$ generator for $e_{\chi} C_n$ (see \cite{SStickelberger} for more on circular units). The definition for $\alpha$ extends to a map $A: K_n^{\times} \to K_n[\Gamma_n]$ defined in exactly the same way. Since $A$ is a Galois map and $\log_p$ vanishes at roots of unity, it follows that $A(u_n) = \upsilon_n(\chi)$ and
\[ A(c_n) = (p-1) \Theta_n(\chi). \]
Let $\vartheta_n(\chi) \in \mbb{Z}_p[\Gamma_n]$ such that $u_n^{(p-1)\vartheta_n(\chi)} = c_n$. Then
\[ \vartheta_n(\chi) \upsilon_n(\chi) = \Theta_n(\chi).\]
Note that
\[ \int_{\ZP} \zeta_{\psi}^x\ \mathrm{d}\mu_{\Theta}^{\chi}(x) = (\chi(g) \zeta_{\psi}^{-1} -1) \cdot \sum_{\substack{a=1 \\ (a,p)=1}}^{p^{n+1}} \log_p \big( 1 - \zeta_{p^{n+1}}^a \big) \chi\psi(a)^{-1} \neq 0 \]
since $\chi \neq 1$ and $L_p(1,\chi\psi) \neq 0$. So it must be that
\[ \int_{\ZP} \zeta_{\psi}^x\ \mathrm{d}\mu_{\upsilon}^{\chi}(x) \neq 0, \]
Hence $\upsilon_n(\chi)$ is invertible in $K_n[\Gamma_n]$. This gives us
\[ \beta_n(\chi)\Theta_n(\chi) = \vartheta_n(\chi) \in \ZP[\Gamma_n], \]
as desired.
\end{proof}
As noted before, this gives us the following
\begin{corollary} \label{realstickelberger1} The sequence $\big(\vartheta_n(\chi) \big)$ in $\ZP\llbracket \Gamma_n \rrbracket$ is such that $\vartheta_n(\chi)$ annihilates $e_{\chi} \cdot \left( \mfrak{C}(k_n^+) \otimes \ZP \right)$.
\end{corollary}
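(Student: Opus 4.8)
The plan is to deduce the annihilation directly from the integral real Stickelberger theorem \cite[Theorem 1.1]{A} by exhibiting $\vartheta_n(\chi) e_{\chi}$ in the form $\beta \Theta_n$ for a cleverly chosen $\beta$. First I would set $\beta = \beta_n(\chi) e_{\chi} \in K_n[\Gal(k_n/\mbb{Q})]$. Using the defining relation $e_{\chi}\Theta_n = \Theta_n(\chi) e_{\chi}$ together with $\vartheta_n(\chi) = \beta_n(\chi)\Theta_n(\chi)$, one computes
\[ \beta \Theta_n = \beta_n(\chi) e_{\chi} \Theta_n = \beta_n(\chi)\Theta_n(\chi) e_{\chi} = \vartheta_n(\chi) e_{\chi}. \]
Thus, as soon as $\beta$ is shown to meet the hypothesis of \cite[Theorem 1.1]{A}, that theorem yields that $\vartheta_n(\chi) e_{\chi}$ annihilates $\mfrak{C}(k_n^+)\otimes \mfrak{o}_n$.

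The heart of the matter — and the step I expect to be the main obstacle — is verifying the integrality condition $\beta \cdot \alpha(E_n^+) \subseteq \mfrak{o}_n[\Gal(k_n/\mbb{Q})]$. Here I would lean on the local unit module structure already invoked in the proof of the preceding proposition: $u_n$ generates $e_{\chi} U_n$ over $\ZP[\Gamma_n]$ and $A(u_n) = \upsilon_n(\chi)$. Since the global units $E_n^+$ embed into the local units at the unique prime above $p$ and $\log_p$ annihilates roots of unity, the map $A$ factors through $U_n$; being $\ZP[\Gamma_n]$-linear and $\Gamma_n$-equivariant, it sends the $e_{\chi}$-part of the image of $E_n^+$ into $A(e_{\chi} U_n) = \ZP[\Gamma_n]\,\upsilon_n(\chi)$. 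Because $\beta_n(\chi)$ was constructed as the inverse of $\upsilon_n(\chi)$ on the $e_{\chi}$-component, multiplication by $\beta = \beta_n(\chi) e_{\chi}$ collapses this module:
\[ \beta \cdot \alpha(E_n^+) \subseteq \ZP[\Gamma_n]\,\beta_n(\chi)\upsilon_n(\chi)\,e_{\chi} = \ZP[\Gamma_n]\,e_{\chi} \subseteq \mfrak{o}_n[\Gal(k_n/\mbb{Q})], \]
where I use that the coefficients $\chi(\zeta)/|W|$ of $e_{\chi}$ lie in $\ZP$, so that $e_{\chi} \in \ZP[\Gal(k_n/\mbb{Q})]$. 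This is precisely the hypothesis required, and it hinges on the generation statement for $e_{\chi}U_n$ drawn from \cite{W,IW}.

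Finally I would descend from $\mfrak{o}_n$-coefficients to $\ZP$-coefficients. As $\mfrak{o}_n$ is free over $\ZP$, tensoring the finite group $\mfrak{C}(k_n^+)$ yields a Galois-equivariant inclusion $\mfrak{C}(k_n^+)\otimes \ZP \hookrightarrow \mfrak{C}(k_n^+)\otimes \mfrak{o}_n$, so $e_{\chi}(\mfrak{C}(k_n^+)\otimes \ZP)$ embeds into $e_{\chi}(\mfrak{C}(k_n^+)\otimes \mfrak{o}_n)$. For $x$ in the former we have $e_{\chi} x = x$, whence $\vartheta_n(\chi) x = \vartheta_n(\chi) e_{\chi} x = 0$ by the annihilation just established. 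Together with the preceding proposition, which guarantees $\vartheta_n(\chi) \in \ZP[\Gamma_n]$, and the coherence of the sequence $(\vartheta_n(\chi))$ already recorded, this shows that $\vartheta_n(\chi)$ annihilates $e_{\chi}(\mfrak{C}(k_n^+)\otimes \ZP)$, as claimed.
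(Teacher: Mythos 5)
Your proposal is correct and takes essentially the same route as the paper: the corollary there is precisely the combination of \cite[Theorem~1.1]{A} applied with $\beta = \beta_n(\chi)e_{\chi}$ and the preceding proposition, whose proof already supplies the local-unit facts you invoke ($e_{\chi}U_n = \ZP[\Gamma_n]\,u_n$, $A(u_n) = \upsilon_n(\chi)$, and the invertibility $\beta_n(\chi)\upsilon_n(\chi) = 1$). Your write-up merely makes explicit two steps the paper leaves implicit behind ``as noted before'' --- the verification of the integrality hypothesis $\beta \cdot \alpha(E_n^+) \subseteq \mfrak{o}_n[\Gal(k_n/\mbb{Q})]$ on all of $E_n^+$ (not just at $\delta_n$), and the descent from $\mfrak{o}_n$-coefficients to $\ZP$-coefficients via the split injection $\mfrak{C}(k_n^+)\otimes\ZP \hookrightarrow \mfrak{C}(k_n^+)\otimes\mfrak{o}_n$ --- both of which you handle correctly.
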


Now, let $I_{\vartheta}^{\chi}$ denote the power series in $\ZP \llbracket X \rrbracket$ obtained from $\big( \vartheta_n(\chi) \big)$ through the Iwasawa transform. Iwasawa \cite{IW} (see also \cite[Theorem 13.56]{W}) showed that there exists $S(X) \in \ZP \llbracket X \rrbracket^{\times}$ such that
\[ \frac{I_{\vartheta}^{\chi}(\kappa^s-1)}{S(\kappa^s-1)} = L_p(1-s,\chi).\]
Let $(\varsigma_n) \in \ZP\llbracket \Gamma \rrbracket$ such that $I_{\varsigma}(X) = S(X)$. Then
\[ L_p(1-s,\chi) = \left. \frac{I_{\vartheta}^{\chi}(X)}{I_{\varsigma}(X)} \right|_{X=\kappa^s-1} = \left. \frac{F_{\vartheta}^{\chi} (T) }{F_{\varsigma} (T)} \right|_{T=\kappa^{-s}} \]

This gives us the following
\begin{theorem} Let $L_{\theta^+}^{\chi}(T)$ be the function defined by
\[ L_{\theta^+}^{\chi}(T) = \frac{F_{\vartheta}^{\chi} (T)}{F_{\varsigma}(T)}.\]
Then $L_{\theta^+}^{\chi}(\kappa^{-s}) = L_p(1-s,\chi)$.
\end{theorem}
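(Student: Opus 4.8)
The plan is to recognize that this theorem merely repackages the displayed identity derived immediately above it, translating from the Iwasawa coordinate $X$ to the Fourier coordinate $T$. The only tool needed is the change of variables relating the two transforms, established in \S2.

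First I would recall from the preliminaries that for any distribution the Iwasawa and Fourier transforms satisfy $I_{\mu}(X) = F_{\mu}\!\left(\frac{1}{X+1}\right)$. Substituting $X = \kappa^s - 1$, so that $X+1 = \kappa^s$ and hence $\frac{1}{X+1} = \kappa^{-s}$, this becomes $I_{\mu}(\kappa^s - 1) = F_{\mu}(\kappa^{-s})$. Applying this identity in both the $\vartheta$ and $\varsigma$ cases gives
\[ \frac{I_{\vartheta}^{\chi}(\kappa^s - 1)}{I_{\varsigma}(\kappa^s - 1)} = \frac{F_{\vartheta}^{\chi}(\kappa^{-s})}{F_{\varsigma}(\kappa^{-s})} = L_{\theta^+}^{\chi}(\kappa^{-s}), \]
where the last equality is just the definition of $L_{\theta^+}^{\chi}$.

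Next I would invoke the Iwasawa identity \cite{IW} (see also \cite[Theorem 13.56]{W}) quoted just above, namely that there is a unit $S(X) \in \ZP\llbracket X \rrbracket^{\times}$ with $I_{\vartheta}^{\chi}(\kappa^s - 1)/S(\kappa^s - 1) = L_p(1-s,\chi)$, together with the defining property $I_{\varsigma}(X) = S(X)$ of the coherent sequence $(\varsigma_n)$. Substituting $I_{\varsigma} = S$ into the left-hand ratio yields
\[ L_{\theta^+}^{\chi}(\kappa^{-s}) = \frac{I_{\vartheta}^{\chi}(\kappa^s - 1)}{S(\kappa^s - 1)} = L_p(1-s,\chi), \]
which is the assertion.

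I do not expect any genuine obstacle here: all of the analytic content has already been absorbed into the cited theorem of Iwasawa and into the construction of the coherent annihilators $\vartheta_n(\chi)$ in the previous subsection. The only point deserving a line of care is that $L_{\theta^+}^{\chi}(T) = F_{\vartheta}^{\chi}(T)/F_{\varsigma}(T)$ is a bona fide element of $\ZP\llbracket T-1 \rrbracket$, so that the quotient and its evaluation at $T = \kappa^{-s}$ are legitimate. This holds because $F_{\varsigma}(T) = S\!\left(\frac{1}{T}-1\right)$ is the image of the unit $S(X)$ under the isomorphism $\ZP\llbracket X \rrbracket \simeq \ZP\llbracket T-1 \rrbracket$ sending $1+X \mapsto 1/T$, hence is itself a unit; the ratio is therefore a well-defined power series and the substitution $T = \kappa^{-s}$ causes no trouble.
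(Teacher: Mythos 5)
Your proposal is correct and follows essentially the same route as the paper: the paper's proof is precisely the displayed chain $L_p(1-s,\chi) = I_{\vartheta}^{\chi}(X)/I_{\varsigma}(X)\big|_{X=\kappa^s-1} = F_{\vartheta}^{\chi}(T)/F_{\varsigma}(T)\big|_{T=\kappa^{-s}}$, obtained by citing Iwasawa's result, defining $\varsigma$ by $I_{\varsigma}(X)=S(X)$, and applying the change of coordinates $I_{\mu}(X)=F_{\mu}\bigl(\tfrac{1}{X+1}\bigr)$ from \S 2, exactly as you do. Your closing remark that $F_{\varsigma}(T)$ is a unit in $\ZP\llbracket T-1\rrbracket$ (so the quotient is a genuine power series) is a point the paper defers to the remark following the theorem, but it is the same observation.
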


\begin{remark} So the $p$-adic $L$-function attached to $\chi$ (an even character) arises from the inverse limit of the $\chi$-parts of the real Stickelberger elements $\vartheta_n$. As before, for $\chi \neq 1$, we have $L_{\theta^+}^{\chi}(T) \in \ZP \llbracket T-1 \rrbracket$ and the isomorphisms
\begin{equation} \label{functional2} \frac{\ZP(\Gamma)}{\big( \mu_{\vartheta}^{\chi} \big)} \simeq \frac{ \ZP\llbracket \Gamma \rrbracket }{ \big( \vartheta_n(\chi) \big)}  \simeq \frac{ \ZP \llbracket T-1 \rrbracket}{\big( L_{\theta^+}^{\chi}(T) \big)}.\end{equation}
\end{remark}

\section{Comparison}

Let $\chi$ be an odd character not equal to $\omega$, so $\chi^* \neq 1$. From \Cref{functional1,functional2} we get the following theorem.
\begin{theorem} \label{functional}
For all $s \in \ZP$, we have
\[ L_{\theta^-}^{\chi} \big( \kappa^{-s} \big) = L_p\big( s,\chi^* \big) = L_{\theta^+}^{\chi^*} \big( \kappa^{s-1} \big).\]
\end{theorem}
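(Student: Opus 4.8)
The plan is to read off both equalities directly from the interpolation formulas packaged in \Cref{functional1,functional2}, using only two changes of variable together with a parity check on $\chi^*$. The quantity that links the two outer expressions is $L_p(s,\chi^*)$, so I would establish each equality against this common intermediary separately and then concatenate.

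First I would handle the left-hand equality. The minus-side theorem underlying \Cref{functional1} asserts the identity $L_{\theta^-}^{\chi}(\kappa^s) = L_p(-s,\chi^*)$ of functions on $\ZP$. Substituting $s \mapsto -s$ — legitimate precisely because this is an identity for every $s \in \ZP$, not merely on a dense subset — yields $L_{\theta^-}^{\chi}(\kappa^{-s}) = L_p(s,\chi^*)$, which is exactly the left-hand equality. For this step I need only that $\chi$ is odd and $\chi \neq \omega$, so that $\chi^* \neq 1$ and $L_{\theta^-}^{\chi}(T) \in \ZP\llbracket T-1 \rrbracket$ is well defined; both are standing hypotheses of the present theorem.

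Next I would address the right-hand equality. Since $\omega$ is odd and $\chi\chi^* = \omega$, the character $\chi^*$ is even, and $\chi \neq \omega$ forces $\chi^* \neq 1$; thus $\chi^*$ meets exactly the hypotheses under which the plus-side theorem behind \Cref{functional2} applies. That theorem gives $L_{\theta^+}^{\psi}(\kappa^{-s}) = L_p(1-s,\psi)$ for every even $\psi \neq 1$. Taking $\psi = \chi^*$ and then substituting $s \mapsto 1-s$, I obtain $L_{\theta^+}^{\chi^*}(\kappa^{-(1-s)}) = L_p(s,\chi^*)$, that is, $L_{\theta^+}^{\chi^*}(\kappa^{s-1}) = L_p(s,\chi^*)$, which is the right-hand equality.

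Chaining the two displayed identities through the common value $L_p(s,\chi^*)$ completes the argument. I expect no genuine obstacle here: the whole proof is bookkeeping with the substitutions $s \mapsto -s$ and $s \mapsto 1-s$ and with the parity relation $\chi\chi^* = \omega$. The single point demanding care is confirming that $\chi^*$ falls under the even, nontrivial hypothesis of the plus-side construction — guaranteed by $\chi$ odd together with $\chi \neq \omega$ — so that $L_{\theta^+}^{\chi^*}$ and its interpolation property are indeed available.
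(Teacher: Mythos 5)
Your proposal is correct and is essentially the paper's own argument: the paper derives \Cref{functional} by combining the two interpolation theorems, $L_{\theta^-}^{\chi}(\kappa^s) = L_p(-s,\chi^*)$ from the minus side and $L_{\theta^+}^{\chi}(\kappa^{-s}) = L_p(1-s,\chi)$ from the plus side, exactly via the substitutions $s \mapsto -s$ and $s \mapsto 1-s$ you describe. Your explicit verification that $\chi^*$ is even and nontrivial (so the plus-side theorem applies) is the same standing hypothesis the paper invokes when it sets $\chi \neq \omega$ odd, so there is nothing to add.
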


Consider the map $\iota: \ZP \llbracket T-1 \rrbracket \to \ZP \llbracket T-1 \rrbracket$ defined by
\[ \iota: F(T) \mapsto F \left( \frac{1}{\kappa T} \right). \]
The map $\iota$ is well-defined since $\frac{1}{\kappa T} \in 1 + (p,T-1)$, what's more, $\iota$ is an involution. Hence, $\iota$ is an automorphism of $\mbb{Z}_p\llbracket T-1 \rrbracket$. This gives us the following
\begin{corollary} We have the following equality of power series:
\[ L_{\theta^-}^{\chi} \left( \frac{1}{\kappa T} \right) = L_{\theta^+}^{\chi^*}(T) \in \ZP\llbracket T-1 \rrbracket.\]
\end{corollary}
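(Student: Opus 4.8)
\emph{Proof proposal.} The plan is to read the corollary off from \Cref{functional} together with the fact that an integral $p$-adic power series is determined by its values on $1+p\ZP$. First I would recast the assertion in terms of the involution $\iota$ introduced just above: since $\iota$ is an automorphism of $\ZP\llbracket T-1\rrbracket$, both $\iota\big(L_{\theta^-}^{\chi}\big)(T) = L_{\theta^-}^{\chi}\!\left(\tfrac{1}{\kappa T}\right)$ and $L_{\theta^+}^{\chi^*}(T)$ are genuine elements of $\ZP\llbracket T-1\rrbracket$ (the former because $\chi \neq \omega$, the latter because $\chi^* \neq 1$, by the integrality remarks of \S3 and \S4). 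So the goal is the equality of these two power series, and it suffices to check that they agree at sufficiently many points.

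Next I would evaluate both sides at the points $T = \kappa^{s-1}$ with $s \in \ZP$. On the left, $\tfrac{1}{\kappa\cdot\kappa^{s-1}} = \kappa^{-s}$, so $\iota\big(L_{\theta^-}^{\chi}\big)(\kappa^{s-1}) = L_{\theta^-}^{\chi}(\kappa^{-s})$, which by \Cref{functional} equals $L_p(s,\chi^*)$. On the right, \Cref{functional} gives $L_{\theta^+}^{\chi^*}(\kappa^{s-1}) = L_p(s,\chi^*)$ directly. Hence the two power series take the same value at every $\kappa^{s-1}$, $s \in \ZP$.

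Finally I would promote this pointwise agreement to an identity of power series. Because $\kappa$ is a topological generator of $1+p\ZP$, the map $s \mapsto \kappa^{s-1}$ is a homeomorphism of $\ZP$ onto $1+p\ZP$, so the evaluation points exhaust $1+p\ZP$; equivalently, writing $X = T-1$, they fill the closed disk $|X|_p \le 1/p$. The coefficients of each series lie in $\ZP$, so both converge on this disk, and their difference is a power series that vanishes at these infinitely many points. By Strassmann's theorem a convergent $p$-adic power series with infinitely many zeros in its disk of convergence is identically zero; hence the difference vanishes and $L_{\theta^-}^{\chi}\!\left(\tfrac{1}{\kappa T}\right) = L_{\theta^+}^{\chi^*}(T)$, as claimed.

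The substitutions into \Cref{functional} are immediate; the only point requiring care is the last one, namely confirming that these are honestly convergent power series (ensured by the $\ZP$-integrality of the coefficients) and that the evaluation set $\{\kappa^{s-1}\}$ is infinite and sits inside the disk of convergence, so that Strassmann's uniqueness principle genuinely applies. I expect this convergence/rigidity step to be the main, though routine, obstacle.
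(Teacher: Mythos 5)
Your proposal is correct and takes essentially the same route as the paper: the paper's proof likewise uses \Cref{functional} to conclude that $L_{\theta^-}^{\chi}\left( \frac{1}{\kappa T} \right) - L_{\theta^+}^{\chi^*}(T)$ vanishes on a neighborhood of $1$ (your evaluation set $\{\kappa^{s-1} : s \in \ZP\} = 1+p\ZP$) and then invokes a $p$-adic rigidity principle, namely the Weierstrass preparation theorem in $\ZP\llbracket T-1 \rrbracket$, where you invoke Strassmann's theorem (after your rescaling to the disk $|T-1|_p \le 1/p$ the two are interchangeable here). No gaps.
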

\begin{proof}
This follows immediately from the fact that $L_{\theta^-}^{\chi}(1/(\kappa T)) - L_{\theta^+}^{\chi^*}(T)$ vanishes on a neighborhood of $1$ and the Weierstrass preparation theorem in $\ZP \llbracket T-1 \rrbracket$.
\end{proof}

The elements $\theta_n^+(\chi^*)$, the sequence of group ring elements corresponding to $L_{\theta^+}^{\chi^*}(T)$, are of interest for the following reason.

\begin{proposition} \label{realstickelberger2} For every non-negative integer $n$, the elements $\theta_n^+(\chi^*)$ annihilate $e_{\chi^*} \left( \mfrak{C}(k_n^+) \otimes \ZP \right)$.
\end{proposition}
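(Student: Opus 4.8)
The plan is to deduce the statement directly from \Cref{realstickelberger1} by recognizing $\theta_n^+(\chi^*)$ as a group-ring multiple of $\vartheta_n(\chi^*)$. First note that $\chi^*$ is a nontrivial even character: $\chi$ odd together with $\chi\chi^*=\omega$ forces $\chi^*$ to be even, while the hypothesis $\chi\neq\omega$ gives $\chi^*\neq 1$. Hence the plus-side construction of \S 4 applies verbatim with $\chi^*$ in place of $\chi$, and \Cref{realstickelberger1} tells us that $\vartheta_n(\chi^*)$ annihilates $e_{\chi^*}\big(\mfrak{C}(k_n^+)\otimes\ZP\big)$ for every $n$. By definition, $\theta_n^+(\chi^*)$ is the level-$n$ term of the coherent sequence in $\ZP\llbracket\Gamma\rrbracket$ whose image under the Fourier isomorphism $\ZP\llbracket\Gamma\rrbracket\simeq\ZP\llbracket T-1\rrbracket$ is
\[ L_{\theta^+}^{\chi^*}(T)=\frac{F_{\vartheta}^{\chi^*}(T)}{F_{\varsigma}(T)}, \]
whereas $\vartheta_n(\chi^*)$ is the level-$n$ term of the sequence with Fourier transform $F_{\vartheta}^{\chi^*}(T)$.

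The crux is that the denominator $F_{\varsigma}(T)$ is a unit. Indeed $S(X)=I_{\varsigma}(X)\in\ZP\llbracket X\rrbracket^{\times}$, and since the Iwasawa and Fourier maps $\ZP\llbracket X\rrbracket\simeq\ZP\llbracket\Gamma\rrbracket\simeq\ZP\llbracket T-1\rrbracket$ are ring isomorphisms, $F_{\varsigma}(T)$ is a unit in $\ZP\llbracket T-1\rrbracket$ and the associated coherent sequence $(\varsigma_n)$ is a unit in $\ZP\llbracket\Gamma\rrbracket$. Because the Fourier transform carries multiplication of power series to multiplication in the Iwasawa algebra, the displayed quotient yields in $\ZP\llbracket\Gamma\rrbracket$ the identity of coherent sequences expressing $\big(\theta_n^+(\chi^*)\big)$ as $(\varsigma_n)^{-1}\big(\vartheta_n(\chi^*)\big)$. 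Projecting along the ring homomorphism $\ZP\llbracket\Gamma\rrbracket\to\ZP[\Gamma_n]$, which sends units to units, gives $\varsigma_n\in\ZP[\Gamma_n]^{\times}$ and therefore
\[ \theta_n^+(\chi^*)=\varsigma_n^{-1}\,\vartheta_n(\chi^*)\in\big(\vartheta_n(\chi^*)\big)\subseteq\ZP[\Gamma_n]. \]

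It then remains only to observe that the annihilator of the $\ZP[\Gamma_n]$-module $e_{\chi^*}\big(\mfrak{C}(k_n^+)\otimes\ZP\big)$ is an ideal, so that any $\ZP[\Gamma_n]$-multiple of $\vartheta_n(\chi^*)$ annihilates it as well; applying this to $\theta_n^+(\chi^*)=\varsigma_n^{-1}\vartheta_n(\chi^*)$ finishes the argument. The one point I expect to need genuine care is the transfer of unit-ness from the Iwasawa algebra down to each finite layer: one must confirm that $S(X)$ being a unit really forces each $\varsigma_n$ to be a unit in $\ZP[\Gamma_n]$, and not merely that $(\varsigma_n)$ is invertible in the inverse limit, so that $\theta_n^+(\chi^*)$ lies genuinely in the principal ideal $\big(\vartheta_n(\chi^*)\big)$ at each \emph{individual} level and the annihilation really descends layer by layer rather than only in the tower.
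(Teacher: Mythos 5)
Your proposal is correct and takes essentially the same route as the paper: invoke \Cref{realstickelberger1} to get that $\vartheta_n(\chi^*)$ annihilates $e_{\chi^*}\left( \mfrak{C}(k_n^+) \otimes \ZP \right)$, then use invertibility of $F_{\varsigma}(T)$ in $\ZP\llbracket T-1\rrbracket$ to conclude that $(\varsigma_n)$ is a unit in $\ZP\llbracket\Gamma\rrbracket$, hence each $\varsigma_n$ is a unit in $\ZP[\Gamma_n]$, so that $\theta_n^+(\chi^*)=\varsigma_n^{-1}\vartheta_n(\chi^*)$ annihilates as well. The point you flag as needing care---that unit-ness descends to each finite layer and not merely to the inverse limit---is resolved exactly as you say (the projection $\ZP\llbracket\Gamma\rrbracket\to\ZP[\Gamma_n]$ is a ring homomorphism, so it carries units to units), which is precisely the step the paper states without elaboration.
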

\begin{proof} From \Cref{realstickelberger1}, we know that $\vartheta_n(\chi^*)$ annihilates $e_{\chi^*} \left( \mfrak{C}(k_n^+) \otimes \ZP \right)$. Since $F_{\varsigma}(T)$ is an invertible power series in $\ZP \llbracket T-1 \rrbracket$, it follows that $\varsigma = (\varsigma_n)$ is invertible in $\ZP \llbracket \Gamma \rrbracket$. So $\varsigma_n \in \ZP[\Gamma_n]$ is invertible. It follows that if $\vartheta_n(\chi^*)$ annihilates $e_{\chi^*} \left( \mfrak{C}(k_n^+) \otimes \ZP \right)$, then so must $\vartheta_n(\chi^*) \cdot \varsigma_n^{-1} = \theta_n^+(\chi^*)$.
\end{proof}

Using \Cref{functional}, we can derive an explicit expression for $\mu_{\theta^+}^{\chi^*}$ which, in turn, gives an explicit expression for $\vartheta_n^+(\chi)$. Before giving this formula, we describe a little notation. Let $\mathbf{1}_{a+p^n\ZP}(x)$ denote the indicator function for $a+p^n \ZP$; $\mathbf{1}_{a+p^n \ZP}(x) = 1$ if $x \in a+p^n\ZP$ and $0$ otherwise. For any distribution $\mu \in \ZP (\Gamma)$, and any continuous function $f$, we define
\[ \int_{a+p^n\ZP} f(x)\ \mathrm{d}\mu(x) = \int_{\ZP} \mathbf{1}_{a+p^n\ZP}(x) f(x)\ \mathrm{d}\mu(x).\]
For any constant $b \in \ZP$, the distribution $\mu \circ b$ is defined by
\[ (\mu \circ b) (a + p^n \ZP) = \mu(ab + p^n\ZP).\]
We write $\mathrm{d}\mu(bx)$ for $\mathrm{d}(\mu \circ b)(x)$.

\begin{theorem} The distribution $\mu_{\theta^+}^{\chi^*}$ is given by
\[ \mu_{\theta^+}^{\chi^*}(a+p^n \ZP) = \int_{a+ p^n \ZP} \kappa^x\ \mathrm{d}\mu_{\theta^-}^{\chi}(-x). \]
\end{theorem}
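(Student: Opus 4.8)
The plan is to carry out the entire argument on the Fourier side and then transport the conclusion back to distributions, exploiting that $\mu \mapsto F_\mu$ is injective (it is the isomorphism $\ZP(\Gamma) \xrightarrow{\sim} \ZP\llbracket T-1 \rrbracket$ of \S2). As introduced above, $\mu_{\theta^-}^{\chi}$ and $\mu_{\theta^+}^{\chi^*}$ are the $\ZP$-valued distributions with $F_{\mu_{\theta^-}^{\chi}}(T) = L_{\theta^-}^{\chi}(T)$ and $F_{\mu_{\theta^+}^{\chi^*}}(T) = L_{\theta^+}^{\chi^*}(T)$; since $\chi \neq \omega$ (so $\chi^* \neq 1$) both series lie in $\ZP\llbracket T-1\rrbracket$, so these are honest bounded distributions and the integrals below converge. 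The preceding corollary supplies the identity $L_{\theta^+}^{\chi^*}(T) = L_{\theta^-}^{\chi}(1/(\kappa T))$, so it suffices to produce a distribution $\nu$ with $F_\nu(T) = L_{\theta^-}^{\chi}(1/(\kappa T))$ and to recognize its coset values as the right-hand side of the claim.

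First I would define $\nu$ by the formula in the statement,
\[ \nu(a+p^n\ZP) := \int_{a+p^n\ZP} \kappa^{x}\,\mathrm{d}\mu_{\theta^-}^{\chi}(-x), \]
and check that it is a distribution: writing $a+p^n\ZP = \bigsqcup_{i=0}^{p-1}(a+ip^n+p^{n+1}\ZP)$, additivity of the coset integral gives $\sum_{i=0}^{p-1}\nu(a+ip^n+p^{n+1}\ZP) = \nu(a+p^n\ZP)$ at once. Next I would compute its Fourier transform by a Riemann-sum argument, using that multiplying a bounded distribution by the continuous function $x \mapsto \kappa^x$ pulls inside the integral,
\[ F_\nu(T) = \int_{\ZP} T^{x}\,\kappa^{x}\,\mathrm{d}\mu_{\theta^-}^{\chi}(-x) = \int_{\ZP} (\kappa T)^{x}\,\mathrm{d}\mu_{\theta^-}^{\chi}(-x). \]
Finally, the change of variables $x \mapsto -x$ (legitimate because $-1 \in \ZP^\times$, so $\mu_{\theta^-}^{\chi}\circ(-1)$ is again a distribution) converts the last integral into $\int_{\ZP}(\kappa T)^{-x}\,\mathrm{d}\mu_{\theta^-}^{\chi}(x) = F_{\mu_{\theta^-}^{\chi}}(1/(\kappa T)) = L_{\theta^-}^{\chi}(1/(\kappa T))$.

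Stringing these together gives $F_\nu(T) = L_{\theta^-}^{\chi}(1/(\kappa T)) = L_{\theta^+}^{\chi^*}(T) = F_{\mu_{\theta^+}^{\chi^*}}(T)$, whence $\nu = \mu_{\theta^+}^{\chi^*}$ by injectivity of the Fourier transform, which is the asserted formula. I expect the only real difficulty to lie in the measure-theoretic bookkeeping of the middle paragraph: one must justify passing from the finite Riemann sums $\sum_a T^a\,\nu(a+p^n\ZP)$ to the single integral $\int_{\ZP} T^x \kappa^x\,\mathrm{d}\mu_{\theta^-}^{\chi}(-x)$ (so that the locally constant factor $T^a$ genuinely limits to $T^x$ and the twist $\kappa^x$ may be absorbed into the measure), and to confirm that the reflection $x\mapsto -x$ interacts with the $\int_{a+p^n\ZP}$ notation and with the substitution $T \mapsto 1/(\kappa T)$ (the involution $\iota$) exactly as written; all of this rests on continuity of $x\mapsto (\kappa T)^x$ and boundedness of $\mu_{\theta^-}^{\chi}$. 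Everything else is formal, given \Cref{functional} and the corollary.
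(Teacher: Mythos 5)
Your proposal is correct and takes essentially the same approach as the paper: both define the distribution by the right-hand side, compute its Fourier transform, reduce it to $L_{\theta^-}^{\chi}\!\left(\tfrac{1}{\kappa T}\right)$ by exploiting that $\kappa^x$ is within $p^{n+1}\mathbb{Z}_p$ of $\kappa^a$ on each coset $a+p^n\mathbb{Z}_p$ (your ``pull the continuous factor inside the integral'' step is precisely the paper's $y_n(a)p^{n+1}$ error-term estimate, resting on the same boundedness of $\mu_{\theta^-}^{\chi}$), and then conclude from the identity $L_{\theta^-}^{\chi}(1/(\kappa T)) = L_{\theta^+}^{\chi^*}(T)$ together with injectivity of $\mu \mapsto F_{\mu}$. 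The only cosmetic difference is that the paper carries out your change of variables $x \mapsto -x$ as a direct reindexing $a \mapsto -a$ inside the Riemann sums and cites the functional-equation theorem rather than its corollary.
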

\begin{proof}
We compute the Fourier transform of the measure given in the theorem. Note that
\begin{align*}
\int_{\ZP} T^x\ \mathrm{d}\mu_{\theta^+}^{\chi^*}(x) &= \lim_{n \to \infty} \sum_{a=0}^{p^n-1} T^a  \int_{a+p^n\ZP} \kappa^x\ \mathrm{d}\mu_{\theta^-}^{\chi}(-x) \\
&= \lim_{n \to \infty} \sum_{a=0}^{p^n-1} (\kappa T)^a \int_{a+p^n\ZP} \kappa^{x-a}\ \mathrm{d}\mu_{\theta^-}^{\chi}(-x).
\end{align*}
Consider the integral in the expression above. Let $b \in a+p^n\ZP$ and write $b= a + c p^n $ where $c \in \ZP$. For any $c$, we have $\kappa^{cp^n} = 1 + c_n p^{n+1}$ for some $c_n \in \ZP$. It follows that
\[ \int_{a+p^n\ZP} \kappa^{x-a}\ \mathrm{d}\mu_{\theta^-}^{\chi}(-x) = \mu_{\theta^-}^{\chi}(-a+p^n\ZP) + y_n(a) p^{n+1} \]
for some $y_n(a) \in \ZP$. Hence
\begin{align*}
\int_{\ZP} T^x\ \mathrm{d}\mu_{\theta^+}^{\chi^*} &= \lim_{n \to \infty} \sum_{a=0}^{p^n-1} (\kappa T)^a \mu_{\theta^-}^{\chi^*}(-a+p^n\ZP) + p^{n+1} (\kappa T)^a y_n(a) \\
&= \lim_{n \to \infty} \sum_{a=0}^{p^n-1} (\kappa T)^{-a} \mu_{\theta^-}^{\chi}(a+p^n \ZP) \\
&= L_{\theta^-}^{\chi} \left( \frac{1}{\kappa T} \right).
\end{align*}
The theorem now follows from \Cref{functional}.
\end{proof}

\begin{remark} Written out explicitly we have
\[ \mu_{\theta^+}^{\chi^*}(a+p^n\ZP) = \lim_{m \to \infty} \sum_{c=0}^{p^{m-n}-1} \kappa^{a+cp^n} \left( \frac{-1}{p^{n+1}} \sum_{\zeta \in W} s_n(\zeta \kappa^{-a-cp^n}) \chi(\zeta)^{-1} \right).\]
\end{remark}

We summarize what we've shown in the following theorem.

\begin{theorem} \label{main} Let $\chi \neq \omega$ be an odd character and $\chi^* = \chi^{-1} \omega$. Then
\begin{align*}
e_{\chi} \left( \mfrak{C}(k_n) \otimes \ZP \right)^{\theta_n^-(\chi)} &= 0 \\
e_{\chi^*} \left( \mfrak{C}(k_n^+) \otimes \ZP \right)^{\theta_n^+(\chi^*)} &= 0 = e_{\chi^*} \left( \mfrak{C}(k_n^+) \otimes \ZP \right)^{\vartheta_n(\chi^*)}
\end{align*}
The annihilators above are explicitly described in the following table along with their isomorphic images in $\ZP\llbracket T-1 \rrbracket$ and $\ZP(\Gamma)$:
\[ \begin{array}{|c|c|c|}
\hline
 \ZP \llbracket T-1 \rrbracket & \ZP(\Gamma) & \ZP \llbracket \Gamma \rrbracket \\
\hline
 L_{\theta^-}^{\chi}(T) & \underbrace{\frac{-1}{p^{n+1}} \sum_{\zeta \in W} s_n(\zeta \kappa^a) \chi(\zeta)^{-1}}_{=\mu_{\theta^-}^{\chi}(a+p^n\ZP)} & \underbrace{ \sum_{a=0}^{p^n-1} \mu_{\theta^-}^{\chi}(a+p^n\ZP) \gamma_n^{-a}}_{=\theta_n^-(\chi)} \\
\hline
F_{\varsigma}(T) L_{\theta^+}^{\chi^*}(T) & \underbrace{\frac{1}{p^n} \sum_{\psi \in \widehat{\Gamma}_n} \zeta_{\psi}^{-a} \cdot \frac{ \int_{\ZP} \zeta_{\psi}^x\ \mathrm{d}\mu_{\Theta}^{\chi^*}(x) }{ \int_{\ZP} \zeta_{\psi}^x\ \mathrm{d}\mu_{\upsilon}^{\chi^*}(x) }}_{=\mu_{\vartheta}^{\chi}(a+p^n\ZP)} & \underbrace{\sum_{a=0}^{p^n-1} \mu_{\vartheta}^{\chi^*}(a+p^n\ZP) \gamma_n^{-a}}_{=\vartheta_n(\chi^*)}  \\
\hline
 L_{\theta^+}^{\chi^*}(T) & \underbrace{ \int_{a+p^n\ZP} \kappa^x\ \mathrm{d}\mu_{\theta^-}^{\chi}(-x)}_{\mu_{\theta^+}^{\chi^*}(a+p^n\ZP)} & \underbrace{\sum_{a=0}^{p^n-1} \mu_{\theta^+}^{\chi^*}(a+p^n\ZP) \gamma_n^{-a}}_{\theta_n^+(\chi^*)}  \\
\hline
\end{array} \]
Moreover, we have
\[ L_{\theta^-}^{\chi} \big( \kappa^{-s} \big) = L_p\big( s,\chi^* \big) = L_{\theta^+}^{\chi^*} \big( \kappa^{s-1} \big).\]
\end{theorem}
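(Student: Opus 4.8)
The plan is to read Theorem~\ref{main} not as a fresh assertion but as a consolidation of the material assembled across Sections~3 and~4: the three annihilation statements and the interpolation identity have each been established separately, and the three rows of the table merely record the explicit objects already attached to them. So the proof is principally a matter of matching each entry to its source and then citing the corresponding annihilation result.

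First I would dispatch the three annihilation statements. The equality $e_{\chi}\big(\mfrak{C}(k_n)\otimes\ZP\big)^{\theta_n^-(\chi)}=0$ is the $\chi$-isotypic shadow of the classical Stickelberger theorem (\Cref{classicStickelberger}): taking $\beta=1-c\sigma_c^{-1}$ with $\langle c\rangle=\kappa$, the element $\beta\theta_n$ annihilates the class group of $k_n$, hence $e_{\chi}\beta\theta_n$ annihilates $e_{\chi}\big(\mfrak{C}(k_n)\otimes\ZP\big)$; since $\chi\neq\omega$ the integralizer $1-\kappa c\chi^{-1}(c)T$ is a unit in $\ZP\llbracket T-1\rrbracket$ (as recorded in the remark following Iwasawa's theorem), so dividing it out transfers the annihilation to the integral element whose Fourier transform is $L_{\theta^-}^{\chi}(T)$, namely $\theta_n^-(\chi)$. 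The remaining two lines are immediate: $\vartheta_n(\chi^*)$ annihilates $e_{\chi^*}\big(\mfrak{C}(k_n^+)\otimes\ZP\big)$ by \Cref{realstickelberger1}, and $\theta_n^+(\chi^*)$ does so by \Cref{realstickelberger2}.

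Next I would verify the three rows of the table. The first row is the minus-side coherence computation: the distribution $\mu_{\theta^-}^{\chi}(a+p^n\ZP)=\tfrac{-1}{p^{n+1}}\sum_{\zeta\in W}s_n(\zeta\kappa^a)\chi(\zeta)^{-1}$ is exactly the $\mu_{\theta}^{\chi}$ shown to be coherent in Iwasawa's lemma, and its image under $\mu\mapsto F_{\mu}$ is $L_{\theta^-}^{\chi}(T)$ by the definition of that power series. The second row records $\mu_{\vartheta}^{\chi^*}$ as given in \Cref{explicitStickelberger}, together with the identification $F_{\vartheta}^{\chi^*}(T)=F_{\varsigma}(T)\,L_{\theta^+}^{\chi^*}(T)$ coming from the definition $L_{\theta^+}^{\chi^*}=F_{\vartheta}^{\chi^*}/F_{\varsigma}$. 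The third row is the content of the theorem computing $\mu_{\theta^+}^{\chi^*}(a+p^n\ZP)=\int_{a+p^n\ZP}\kappa^x\,\mathrm{d}\mu_{\theta^-}^{\chi}(-x)$, whose Fourier transform is $L_{\theta^+}^{\chi^*}(T)$, with the explicit limit formula supplied by the remark immediately after that theorem. Finally, the displayed interpolation $L_{\theta^-}^{\chi}(\kappa^{-s})=L_p(s,\chi^*)=L_{\theta^+}^{\chi^*}(\kappa^{s-1})$ is precisely \Cref{functional}.

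The only genuine subtlety, and the step I would treat most carefully, is integrality on the minus side: one must check that after projecting to the $\chi$-component and tensoring with $\ZP$ the resulting annihilator actually lives in $\ZP\llbracket\Gamma\rrbracket\simeq\ZP\llbracket T-1\rrbracket$ rather than merely in $\mbb{Q}_p\llbracket\Gamma\rrbracket$, since $\mu_{\theta}^{\chi}$ is itself unbounded. This is exactly where the hypothesis $\chi\neq\omega$ is forced, as it makes the integralizer a unit and renders the quotients in \Cref{functional1} meaningful. A secondary bookkeeping point is keeping the twisting conventions aligned across the two sides: the involution $\iota\colon F(T)\mapsto F(1/(\kappa T))$ interchanges the evaluations at $\kappa^{-s}$ and $\kappa^{s-1}$, and one must confirm, as in the corollary preceding \Cref{realstickelberger2}, that agreement of the two functions on a neighborhood of $1$ upgrades to an equality of power series via Weierstrass preparation in $\ZP\llbracket T-1\rrbracket$.
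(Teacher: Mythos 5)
Your proposal is correct and takes essentially the same route as the paper: the paper gives no separate proof of \Cref{main} beyond the sentence ``we summarize what we've shown,'' so the argument is exactly the assembly you describe --- \Cref{classicStickelberger} plus the invertibility of the integralizer for the minus-side annihilation, \Cref{realstickelberger1} and \Cref{realstickelberger2} for the plus side, the earlier constructions and \Cref{explicitStickelberger} for the table entries, and \Cref{functional} for the interpolation identity. Your explicit transfer of annihilation from the integralized element to $\theta_n^-(\chi)$ by inverting $1-\kappa c\chi^{-1}(c)T$ (possible precisely because $\chi\neq\omega$) is the one step the paper leaves implicit, and you resolve it exactly as its remarks indicate.
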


The expressions for $\theta_n^+(\chi^*)$ and $\vartheta_n(\chi^*)$ are interesting as they may shed some new light on the quality of $\mfrak{C}(k_n^+) \otimes \ZP$. Can $\theta_n^+(\chi^*)$ be made more explicit? And what about $\vartheta_n(\chi^*)$. In a recent paper \cite{A2}, Waller and the author showed that although the distributions $\mu_{\Theta}^{\chi^*}$ and $\mu_{\upsilon}^{\chi^*}$ are unbounded in value they nonetheless form $C^1(\ZP)$ functionals through \emph{Volkenborn} integration. In fact, the Fourier transform of $\mu_{\beta}^{\chi^*}$ interpolates the kind of Gauss sums that appear in formulas for $L_p(1,\chi^*)$. Can they be used to shed light on the nature of $\vartheta_n(\chi^*)$? We hope to address these questions in the future.

\begin{ack} Many thanks to the anonymous referee for their careful review and for numerous suggestions that have improved the quality of this paper. I also want to thank David Goss for his encouragement during the writing of this manuscript.
\end{ack}

\bibliographystyle{plain}
\bibliography{mybib}

\begin{thebibliography}{10}

\bibitem{A}
Timothy All.
\newblock On $p$-adic annihilators of real ideal classes.
\newblock {\em J. Number Theory}, 133(7):2324--2338, 2013.

\bibitem{A2}
Timothy All and Bradley Waller.
\newblock On a construction of $c^1(\mathbb{Z}_p)$ functionals from
  $\mathbb{Z}_p$-extensions of algebraic number fields.
\newblock arXiv:1410.1561 [math.NT].

\bibitem{IW}
Kenkichi Iwasawa.
\newblock On some modules in the theory of cyclotomic fields.
\newblock {\em J. Math. Soc. Japan}, 16:42--82, 1964.

\bibitem{I}
Kenkichi Iwasawa.
\newblock On {$p$}-adic {$L$}-functions.
\newblock {\em Ann. of Math. (2)}, 89:198--205, 1969.

\bibitem{IW2}
Kenkichi Iwasawa.
\newblock {\em Lectures on {$p$}-adic {$L$}-functions}.
\newblock Princeton University Press, Princeton, N.J.; University of Tokyo
  Press, Tokyo, 1972.
\newblock Annals of Mathematics Studies, No. 74.

\bibitem{LK}
Tomio Kubota and Heinrich-Wolfgang Leopoldt.
\newblock Eine {$p$}-adische {T}heorie der {Z}etawerte. {I}. {E}inf\"uhrung der
  {$p$}-adischen {D}irichletschen {$L$}-{F}unktionen.
\newblock {\em J. Reine Angew. Math.}, 214/215:328--339, 1964.

\bibitem{MW}
B.~Mazur and A.~Wiles.
\newblock Class fields of abelian extensions of {${\bf Q}$}.
\newblock {\em Invent. Math.}, 76(2):179--330, 1984.

\bibitem{R}
Karl Rubin.
\newblock Global units and ideal class groups.
\newblock {\em Invent. Math.}, 89(3):511--526, 1987.

\bibitem{SStickelberger}
W.~Sinnott.
\newblock On the {S}tickelberger ideal and the circular units of an abelian
  field.
\newblock {\em Invent. Math.}, 62(2):181--234, 1980/81.

\bibitem{Sgamma}
W.~Sinnott.
\newblock On the {$\mu $}-invariant of the {$\Gamma $}-transform of a rational
  function.
\newblock {\em Invent. Math.}, 75(2):273--282, 1984.

\bibitem{Th}
Francisco Thaine.
\newblock On the ideal class groups of real abelian number fields.
\newblock {\em Ann. of Math. (2)}, 128(1):1--18, 1988.

\bibitem{W}
Lawrence~C. Washington.
\newblock {\em Introduction to cyclotomic fields}, volume~83 of {\em Graduate
  Texts in Mathematics}.
\newblock Springer-Verlag, New York, second edition, 1997.

\end{thebibliography}

\end{document}